\numberwithin{equation}{section}
\numberwithin{figure}{section}
\theoremstyle{plain}
\newtheorem{thm}{\protect\theoremname}
\theoremstyle{definition}
\newtheorem{defn}[thm]{\protect\definitionname}
\theoremstyle{remark}
\newtheorem{rem}[thm]{\protect\remarkname}
\theoremstyle{plain}
\newtheorem{cor}[thm]{\protect\corollaryname}
\theoremstyle{plain}
\newtheorem{lem}[thm]{\protect\lemmaname}
\theoremstyle{plain}
\newtheorem{prop}[thm]{\protect\propositionname}
\providecommand{\corollaryname}{Corollary}
\providecommand{\definitionname}{Definition}
\providecommand{\lemmaname}{Lemma}
\providecommand{\propositionname}{Proposition}
\providecommand{\remarkname}{Remark}
\providecommand{\theoremname}{Theorem}
\begin{document}
\title[A $\sigma_{2}$ Penrose inequality for conformal AH 4-discs ]{A $\sigma_{2}$ Penrose inequality for conformal asymptotically hyperbolic
4-discs}
\author{Hao Fang }
\address{14 MacLean Hall, Department of Mathematics, University of Iowa, Iowa
City, IA, 52242}
\email{hao-fang@uiowa.edu}
\author{Wei Wei}
\address{Department of Mathematics, Nanjing University, Nanjing, P.R.China,
210093}
\email{wei\_wei@nju.edu.cn}
\thanks{H.F.'s works is partially supported by a Simons Foundation research
collaboration grant. W.W.'s works is partially supported by the Initiative
postdoctoral fund of China.}
\begin{abstract}
In this paper, we consider conformal flat metrics on $\mathbb{R}^{4}$
with an asymptotically hyperbolic (AH) end and possible isolated conic
singularities. We define a mass term of the AH end. If the $\sigma_{2}$
curvature has lower bound $\sigma_{2}\geq\frac{3}{2}$, we prove an
inequality relating the mass and contributions from singularities.
We also classify sharp cases, which is the standard hyperbolic 4-space
$\mathbb{H}^{4}$ when no singularity occurs. It is worth noting that
our curvature condition implies non-positive energy density. 
\end{abstract}

\maketitle

\section{Introduction}

In this paper, we prove a sharp mass inequality for certain asymptotically
hyperbolic(AH) 4-manifolds. Our work is motivated by research works
in mathematical general relativity and conformal geometry of $\sigma_{k}$
curvature.

Positive mass theory is one of the central problems in geometry. Studies
of asymptotically flat manifolds lead to positive mass theorems established
by Schoen and Yau \cite{SY1,SY2}, and then Witten \cite{Witten}.
When certain minimal surfaces, or black holes, are present, Riemannian
Penrose inequalities have been established by Huisken-Ilmanen \cite{HI}
and Bray \cite{Bray1,Bray2,BL}. In particular, for both positive
mass theorems and Penrose inequalities, boundary cases are well understood
and can be determined as Euclidean spaces and Schwarzschild spaces,
respectively. These fundamental geometric results are based on an
important assumption that comes from the physics consideration. Namely,
the positive energy density condition in general relativity leads
to proper local curvature constraints, which, in the Riemannian case,
indicates non-negativity of the scalar curvature.

For universe models with a negative cosmological constant, the corresponding
mathematical theory has also been considered. See, for example \cite{HT,WXZ}.
When restricted to the Riemannian case, it involves studies of asymptotically
hyperbolic manifolds of dimension $n$. Mainly, rigidity and positive
mass theorems can be properly stated and proved for AH manifolds.
See, for example, Min-Oo \cite{MO}, Anderson-Dahl \cite{AD}, Wang
\cite{WangX}, Chru$\acute{s}$ciel-Herzlich \cite{CH}. The positive
energy density condition is also required in these results, which
is equivalent to the geometric condition that the scalar curvature
has a negative lower bound.

Conformal geometry regarding the so-called $\sigma_{k}$ curvature
is another source of our motivation. As a natural extension of the
scalar curvature, $\sigma_{k}$ curvature was first studied by Viaclovsky
\cite{V1}. It has been then extensively studied as important type
of fully non-linear PDEs with significant geometric applications.
See \cite{CGY2,CGY3,CHY,CHY2,CQY,FW1,G0,G1,G2,GB,GLW,GS,GV0,GV1,GV2,GV3,GV4,GV5,GW1,GW2,H,HLT,L1,L2,LiLuc1,LiLuc2,LL1,LL2,PVW,STW,TW,V1,V2,V3,V4,W1}
for some incomplete references in this field. In particular, works
of Chang-Gursky-Yang \cite{CGY1,CGY4} explore properties of $\sigma_{2}$
curvature in closed 4-manifolds and give a conformal characterization
of 4-spheres. Majority of geometric application in this direction
requires a so-called positive $\sigma_{k}$ cone condition. In particular,
this condition implies that the scalar curvature is point-wise positive.

To introduce our results, let us first fix notations. Let $\{x^{1},\cdots,x^{4}\}$
be the standard coordinate system on $\mathbb{R}^{4}$ and $g_{E}$
be the Euclidean metric. Let $r=\sqrt{\sum(x^{i})^{2}}$ and $\text{\ensuremath{\theta\in S^{3}} }$
be the standard polar coordinate of $\mathbb{R}^{4}.$ Denote $D=\left\{ r<1\right\} $
to be the unit ball. For future use, we also define 
\[
s=\log(\frac{1}{r}),
\]
which is a defining function of $\partial D$, the boundary of $D$.
We consider a conformal metric on $D$ as $g=\exp(2u)g_{E}$. Let
$R_{g},$ Ric$_{g}$, and $A_{g}$ be the corresponding scalar curvature,
Ricci tensor and Schouten tensor, respectively. Let $\sigma_{2}(g)=\sigma_{2}(g^{-1}A_{g})$
be the second symmetric polynomial of eigenvalues of $A_{g}$ with
respect to $g$. For example, for the standard hyperbolic space $\mathbb{H}^{4}$,
we have 
\begin{align}
u(x) & =s-\log\sinh s,\nonumber \\
{\rm Ric}_{g} & =-3g,\nonumber \\
A_{g} & =-\frac{1}{2}g,\nonumber \\
R_{g} & =6\sigma_{1}(g^{-1}A_{g})=-12,\nonumber \\
\sigma_{2}(g^{-1}A_{g}) & =\frac{3}{2}.\label{eq:sigma2}
\end{align}

\begin{defn}
\label{def:AHS}Let $p_{1},\cdots,p_{k}\in D$ be $k$ distinct points,
$k\geq0$. Let $(M,g)=(D\backslash\{p_{1},\cdots,p_{k}\},\exp(2u(x))g_{E})$
be a conformal metric on $D$. It is called a conformal asymptotically
hyperbolic space of dimension 4 with cone-like singularities if the
conformal factor $u(x)\in C^{2}(D\backslash\{p_{1,},\cdots,p_{k}\})$
satisfies the following conditions: 
\end{defn}

\begin{enumerate}
\item If $k>0,$ for each $i\in\left\{ 1,\cdots,k\right\} $ there exists
$\beta_{i}>0$ such that $|x-p_{i}|^{j}|\nabla^{k}(u(x)-\beta_{i}\log|x-p_{i}|)|$
is $C^{\alpha_{i}}(B_{\delta}(p_{i}))$ for $j=0,1,2$, and some $\alpha_{i}\in(0,1)$,
$\delta>0$;
\item Near $\partial D$ we have the following asymptotic behavior of $u$,
\label{asymp-assumption} 
\[
u(x)=s-\log\sinh s+s^{4}f(\theta)+h(r,\theta),
\]
where $f\in C^{2}(S^{3})$, and for some positive constant $C$
\[
\overline{\lim}_{s\rightarrow0}\frac{|h|+s|\nabla_{r}h|}{s^{4}}=0,\,\,\,\overline{\lim}_{s\rightarrow0}|\nabla_{rr}h|s^{6}\le C,
\]
\[
\overline{\lim}_{s\rightarrow0}\frac{|\nabla_{\theta}h|+|\nabla_{\theta\theta}h|}{s^{4}}\le C.
\]
\end{enumerate}
Note that for any metric satisfying Definition \ref{def:AHS}, 
\[
g=(\frac{2}{1-|x|^{2}})^{2}[1+2(1-|x|)^{4}f(\theta)+\hat{h}(r,\theta)]g_{E},
\]
where $\overline{\lim}_{r\rightarrow1}\frac{\hat{h}(r,\theta)}{(1-|x|)^{4}}=0$.
Thus, it is asymptotically hyperbolic near the boundary $\partial D$.
Our definition is similar to those defined in the literature. See,
for example, \cite{DGS,WangX}. 

In our setup, we allow the existence of isolated conic singularities.
For each singular point $p_{i},$ the tangent cone of the manifold
is a cone of angle $(1+\beta_{i})|S^{3}|$.

Now we define a mass quantity for the hyperbolic end. 
\begin{defn}
\label{def:m2}For $(M,g)$ satisfying Definition \ref{def:AHS},
the mass for $M$ is the following quantity 
\[
m(M,g)=m(M)=\frac{1}{|S^{3}|}\int_{S^{3}}f(\theta)d\theta,
\]
where $d\theta$ is the standard volume form for the unit sphere $S^{3}$,
and $|S^{3}|$ is the corresponding volume.
\end{defn}

\begin{rem}
In Wang \cite{WangX} and Chru$\acute{s}$ciel-Herzlich \cite{CH},
the mass of a general AH manifold is defined as the Minkowski norm
of certain vector constructed via asymptotic of the metric near the
AH end. The positive mass inequality is then established assuming
$R_{g}\ge-n(n-1)$ and the manifold being spin. The mass that we have
defined is similar but in a conformally flat setting, which is more
restrictive. In fact, we choose our sign convention so that up to
a positive constant, it is one particular component of the mass vector
in \cite{WangX}. We use our notations out of mathematical convenience. 
\end{rem}

In this paper, we discuss conformally flat AH spaces of dimension
4 with a $\sigma_{2}$ curvature positive lower bound condition. Our
main result is the following 
\begin{thm}
\label{thm:main}Assume that $(M,g)$ satisfies Definition \ref{def:AHS}.
If we further assume that 
\begin{equation}
\sigma_{2}(g^{-1}A_{g})\geq\frac{3}{2},\label{eq:>}
\end{equation}
then we have the following 
\begin{equation}
-m(M,g)\geq F(\beta_{1},\cdots,\beta_{k})\geq0,\label{eq:M1}
\end{equation}
where $F(\beta_{1},\cdots,\beta_{k})=\tilde{\beta}^{2}(\tilde{\beta}+2)^{2}+(\frac{8}{3}\tilde{\beta}+4)(\sum_{i=1}^{k}\beta_{i}^{2}-\tilde{\beta}^{2})$
with $\widetilde{\beta}:=\bigg(\sum_{i=1}^{k}\beta_{i}^{3}\bigg)^{1/3}.$
In particular, when $k=1$, or $M$ has exactly one singular point,
we have 
\begin{equation}
-m(M,g)\geq\frac{1}{20}(\beta_{1}+2)^{2}\beta_{1}^{2}.\label{eq:M2}
\end{equation}
If the equality in (\ref{eq:M1}) holds, then $u$ is rotationally
symmetric and $k=1$; Furthermore, $(M,g)$ can be identified as the
Chang-Han-Yang model, which will be discussed in Section 2. 
\end{thm}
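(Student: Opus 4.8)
The plan is to exploit the fact that in dimension four the $\sigma_{2}$ curvature of a conformally flat metric is, up to a divergence, a null Lagrangian, so that its integral reduces to boundary and residue data. Writing $g=e^{2u}g_{E}$ and using that in the flat background the Schouten tensor has components $W_{ij}=-u_{ij}+u_{i}u_{j}-\frac12|\nabla u|^{2}\delta_{ij}$, one has the pointwise identity $\sigma_{2}(A_{g})\,dV_{g}=\sigma_{2}(W)\,dx$, so that hypothesis (\ref{eq:>}) becomes $\sigma_{2}(W)\ge\frac32 e^{4u}$. I would first set up a renormalized Gauss--Bonnet/divergence identity on the punctured, truncated disc $M_{\epsilon,\delta}=\{s>\delta\}\setminus\bigcup_{i}B_{\epsilon}(p_{i})$: since $\sigma_{2}(W)=\operatorname{div}V$ for an explicit field $V=V[u,\nabla u,\nabla^{2}u]$, the integral $\int_{M_{\epsilon,\delta}}\sigma_{2}(W)\,dx$ equals a flux $\oint_{\partial M_{\epsilon,\delta}}\langle V,\nu\rangle\,dS$ whose two families of boundary components, the level set near $\partial D$ and the small spheres $\partial B_{\epsilon}(p_{i})$, will produce respectively the mass term and the conical contributions.

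Next I would evaluate the AH-end flux. Substituting the expansion $u=s-\log\sinh s+s^{4}f(\theta)+h$ from Definition~\ref{def:AHS}(2) into $\langle V,\nu\rangle$, and using the $o(s^{4})$ control on $h,\nabla_{\theta}h,\nabla_{\theta\theta}h$, the flux should split into a divergent part identical to the one generated by the exact hyperbolic factor $s-\log\sinh s$, plus a finite remainder proportional to $\int_{S^{3}}f\,d\theta=|S^{3}|\,m_{2}$. The divergent piece must be matched against the $\frac32\,\mathrm{Vol}_{g}$ produced by integrating the lower bound, so the scheme is genuinely a renormalized identity; executing this cancellation so that the surviving finite quantity is exactly a positive multiple of $-m_{2}$, while controlling the cross terms generated by the subleading data $h$, is the technical crux and the step I expect to be hardest. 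In parallel, near each $p_{i}$ the local behavior $u\sim\beta_{i}\log|x-p_{i}|$ gives a radial model whose $W$ has eigenvalues $\tfrac{\beta_{i}(\beta_{i}+2)}{2}r^{-2}$ (once, radial) and $-\tfrac{\beta_{i}(\beta_{i}+2)}{2}r^{-2}$ (thrice, tangential), so $\sigma_{2}(W)\equiv0$ off the tip, the flux over $\partial B_{\epsilon}(p_{i})$ is $\epsilon$-independent, and its value is proportional to $\beta_{i}^{2}(\beta_{i}+2)^{2}$; this is the residue attached to each cone.

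Assembling these pieces gives a renormalized identity of the schematic form $-c_{0}\,m_{2}=\int_{M}\!\big(\sigma_{2}(A_{g})-\tfrac32\big)dV_{g}^{\,\mathrm{ren}}+c_{1}\sum_{i}\beta_{i}^{2}(\beta_{i}+2)^{2}$ with $c_{0},c_{1}>0$. Since the bulk integrand is nonnegative by (\ref{eq:>}), discarding it immediately yields the sharp single-cone bound (\ref{eq:M2}), in which the second term of $F$ vanishes and $\tfrac1{20}=c_{1}/c_{0}$ after the mass normalization. For $k>1$ the stated $F$ exceeds the bare sum of residues (indeed $\widetilde{\beta}^{4}=(\sum\beta_{i}^{3})^{4/3}$ is not even a polynomial in the $\beta_{i}$), so the bulk term cannot simply be dropped: I would instead retain it and bound it below in terms of the conical data, the non-polynomial effective parameter $\widetilde{\beta}=(\sum_{i}\beta_{i}^{3})^{1/3}$ emerging from the resulting optimization, with the nonnegativity $F\ge0$ following from the power-mean comparison $\widetilde{\beta}^{2}=(\sum\beta_{i}^{3})^{2/3}\le\sum\beta_{i}^{2}$. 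Capturing this superadditive coupling between the bulk term and the several residues quantitatively is the second delicate point, and it is precisely what the rigidity statement (equality forcing $k=1$) reflects.

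Finally, for the equality analysis I would trace equality back through each step: equality in (\ref{eq:>}) forces $\sigma_{2}(A_{g})\equiv\frac32$ and hence kills the bulk integral, equality in the power-mean comparison forces a single effective singularity so that $k=1$, and equality in the flux computation forces $f$ to be constant, i.e.\ the mass to be realized by a rotationally symmetric profile. An Obata/ODE-type uniqueness argument for the radial $\sigma_{2}=\frac32$ equation, with one prescribed conical parameter and the fixed AH asymptotics, then identifies $(M,g)$ with the explicit Chang--Han--Yang model of Section~2, completing the classification.
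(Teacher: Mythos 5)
Your proposal replaces the paper's actual mechanism --- a quasi-local mass $m(t)$ defined on the level sets $L(t)=\{u=t\}$ of the conformal factor (Definition \ref{def:quasi-local mass}) whose monotonicity (Theorem \ref{monotone}) is an \emph{inequality} proved with the Euclidean isoperimetric inequality and Cauchy--Schwarz --- by a global renormalized divergence \emph{identity}, and this substitution creates two genuine gaps. First, for $k>1$ a flux identity can only produce an additive sum of local residues $\sum_i c_1\beta_i^{2}(\beta_i+2)^{2}$, and, as you yourself observe, the stated $F$ is not of this form: it contains $\widetilde{\beta}^{4}=(\sum_i\beta_i^{3})^{4/3}$ and the cross term $\widetilde{\beta}\sum_i\beta_i^{2}$. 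Your proposed repair --- bounding the bulk integral $\int(\sigma_{2}-\tfrac32)\,dV^{\mathrm{ren}}$ from below in terms of the conical data --- cannot work: that integrand is controlled only by the pointwise hypothesis (\ref{eq:>}) and carries no coupling to the $\beta_i$ beyond nonnegativity, so no ``optimization'' can extract $\widetilde{\beta}$ from it. In the paper $\widetilde{\beta}$ arises for a structural reason your scheme cannot reproduce: the quantity $z(t)=(\fint_{L(t)}|\nabla u|^{3})^{1/3}$ tends to $(\sum_i\beta_i^{3})^{1/3}$ because the level set splits into $k$ components near the singular points, and the quasi-local mass contains the \emph{nonlinear} terms $z^{4}$ and $D(t)z(t)$; these are level-set quantities, not fluxes, and Theorem \ref{prop:positive side} evaluates them directly to get $F$.

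Second, even for $k\le1$ your central ``renormalized identity'' $-c_{0}m_{2}=\int(\sigma_{2}-\tfrac32)\,dV^{\mathrm{ren}}+c_{1}\sum_i\beta_i^{2}(\beta_i+2)^{2}$ is not an identity for non-symmetric $u$, and the divergent cancellation you flag as the ``technical crux'' is exactly where an inequality, not an identity, is needed: the renormalized volume term (the paper's $C(t)=e^{4t}|\{u<t\}|$) is compared to the flux terms via the isoperimetric inequality $|L(t)|^{4}\ge 4^{4}|B_{1}|\,|\{u<t\}|^{3}$ together with Cauchy--Schwarz on $L(t)$; this is the essential geometric input and it is entirely absent from your outline. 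Its absence also undermines your rigidity analysis: an identity cannot ``force $f$ to be constant,'' whereas in the paper rotational symmetry in the equality case comes precisely from saturation of the isoperimetric inequality on every level set. The residue computation at each cone and the identification of the sharp case with the Chang--Han--Yang ODE model are consistent with the paper, but the backbone of the argument is missing.
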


As a consequence, we have the following special case
\begin{cor}
\label{cor:main cor}Conditions are given as in Theorem \ref{thm:main}.
If $M$ is smooth without singular points, then 
\[
m(M,g)\leq0.
\]
In particular, the equality holds if and only if $(M,g)$ is the standard
hyperbolic space, $\mathbb{H}^{4}.$ 
\end{cor}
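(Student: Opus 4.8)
The plan is to obtain the corollary as the smooth ($k=0$) specialization of Theorem \ref{thm:main}, so that only the rigidity statement requires genuine work. When $M$ has no singular points we take $k=0$ in Definition \ref{def:AHS}; the index set is then empty, $\widetilde{\beta}=\bigl(\sum_{i=1}^{0}\beta_{i}^{3}\bigr)^{1/3}=0$, and every symmetric sum appearing in $F$ vanishes, so $F\equiv 0$ and (\ref{eq:M1}) becomes $-m_{2}(M,g)\ge 0$, which is the asserted inequality $m_{2}(M,g)\le 0$. The easy direction of the equality case is a direct check: for $\mathbb{H}^{4}$ one has $u=s-\log\sinh s$ exactly, so the expansion in Definition \ref{def:AHS} has $f\equiv 0$ and hence $m_{2}(M,g)=\frac{1}{|S^{3}|}\int_{S^{3}}f\,d\theta=0$. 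It therefore remains to show that, for smooth $M$, the vanishing $m_{2}(M,g)=0$ forces $(M,g)=\mathbb{H}^{4}$.

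For this rigidity I would trace the equality through the integral identity behind Theorem \ref{thm:main}. Because $g=\exp(2u)g_{E}$ is locally conformally flat, its Cotton tensor vanishes and the first Newton tensor $T_{1}=\sigma_{1}(A_{g})g-A_{g}$ is divergence free; integrating its flux over $\{s\ge\delta\}$ and letting $\delta\to 0$ should yield an identity of the schematic form $c\,\bigl(-m_{2}(M,g)\bigr)=\int_{M}\Phi(A_{g})\,dV_{g}+\sum_{i}\mathcal{C}(\beta_{i})$, with $c>0$, with $\int_{M}\Phi(A_{g})\,dV_{g}\ge 0$ a consequence of $\sigma_{2}(A_{g})\ge\tfrac{3}{2}$ that vanishes only where $\sigma_{2}(A_{g})=\tfrac{3}{2}$, and with cone contributions $\mathcal{C}(\beta_{i})\ge 0$. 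With $k=0$ the cone terms are absent, so $m_{2}(M,g)=0$ forces $\int_{M}\Phi(A_{g})\,dV_{g}=0$, hence the pointwise saturation $\sigma_{2}(A_{g})\equiv\tfrac{3}{2}$ on all of $M$, and forces the auxiliary (Cauchy--Schwarz / averaging) inequalities used along the way to be equalities, which in turn forces $u$ to be rotationally symmetric, $u=u(r)$.

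Once $u=u(r)$ and $\sigma_{2}(A_{g})\equiv\tfrac{3}{2}$, the equation reduces to an ordinary differential equation for the radial profile. Smoothness of $M$ at the center (no cone is present, so there is no $\beta$ there) supplies the regular condition $u'(0)=0$, while the expansion in Definition \ref{def:AHS} fixes the behaviour at $\partial D$; since $u=s-\log\sinh s$ solves this radial boundary value problem, uniqueness on that branch gives $(M,g)=\mathbb{H}^{4}$. I expect the rigidity, not the inequality, to be the main obstacle, and within it the extraction of rotational symmetry from the equality case: one must rule out every nonsymmetric complete smooth conformally flat solution of $\sigma_{2}(A_{g})=\tfrac{3}{2}$ with the prescribed asymptotics. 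I note also that the geometrically relevant branch here is the negative one (as for $\mathbb{H}^{4}$, where $A_{g}=-\tfrac{1}{2}g$), on which $\sigma_{2}(A_{g})\ge\tfrac{3}{2}$ forces $R_{g}\le -12$ by the Newton--Maclaurin inequality, the non-positive energy density noted in the abstract; any symmetrization or moving-plane argument must be run on that branch, where the $\sigma_{2}$ operator remains elliptic, although the symmetry may instead be read directly off the equality cases in the flux computation.
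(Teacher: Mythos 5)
Your derivation of the inequality is exactly the paper's: with $k=0$ the function $F$ vanishes, so (\ref{eq:M1}) reads $-m_{2}(M,g)\ge 0$, and the check that $\mathbb{H}^{4}$ has $f\equiv 0$, hence $m_{2}=0$, is also as in the paper. The gap is in the rigidity, which you yourself flag as the main obstacle but do not close. Your proposed mechanism --- a single global flux identity $c\,(-m_{2})=\int_{M}\Phi(A_{g})\,dV_{g}+\sum_{i}\mathcal{C}(\beta_{i})$ coming from the divergence-free Newton tensor --- is not what the paper uses, and even granting such an identity it would only deliver the pointwise saturation $\sigma_{2}(A_{g})\equiv\tfrac{3}{2}$. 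That alone does not give rotational symmetry: as you note, one would still have to exclude nonsymmetric solutions of $\sigma_{2}=\tfrac{3}{2}$ with the prescribed asymptotics, and neither the vague appeal to ``averaging inequalities becoming equalities'' nor the suggested moving-plane alternative is carried out (the latter would be delicate in the negative cone with a conic-type asymptotic end).

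The paper's actual mechanism is the level-set quasi-local mass $m(t)$ of Definition \ref{def:quasi-local mass}. By Theorems \ref{prop:positive side} and \ref{thm:negative side}, $m_{2}(M,g)=0$ with $k=0$ forces $\lim_{t\to-\infty}m(t)=\lim_{t\to+\infty}m(t)$, so by Theorem \ref{monotone} $m(t)$ is constant and every inequality in its proof is an equality. The decisive one is the iso-perimetric inequality applied to each sublevel set $S(t)$: its equality case forces each $S(t)$ to be a round ball (and the Cauchy--Schwarz steps force $|\nabla u|$ constant on each $L(t)$), which is what yields $u=u(r)$; the remaining equalities give $\sigma_{2}\equiv\tfrac{3}{2}$, and the radial analysis of Section 2 (the first integral (\ref{eq:first integral}) with $\beta=0$, hence $k=0$) identifies the metric as $\mathbb{H}^{4}$. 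Your final ODE step is consistent with that, but without identifying the sharp iso-perimetric inequality on the level sets as the source of symmetry, the argument from $m_{2}=0$ to rotational symmetry is missing.
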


We make some comments regarding our results.

Positive mass problems related to $\sigma_{k}$ curvature have been
considered for both asymptotic flat and asymptotic hyperbolic manifolds
under different settings. See Ge-Wang-Wu \cite{GWW1,GWW2,GWW3,GWW4},
Ge-Wang-Wu-Xia \cite{GWWX1} and Li-Nguyen\cite{LiLuc1}. Our definitions
and results are different in flavor. Also, we focus only on the $\sigma_{2}$
curvature in dimension 4 case.

The most interesting feature of our results is our curvature assumption.
A simple computation shows that our assumption $\sigma_{2}(g^{-1}A_{g})\geq\frac{3}{2}$
leads to the scalar curvature condition $R_{g}\leq-12$, which is
exactly the opposite comparing to that posed in previous works of
\cite{WangX} and Chru$\acute{s}$ciel-Herzlich \cite{CH}. In a vague
sense, we are considering a class of AH manifolds with negative or
non-positive energy density. Theorem \ref{thm:main} and Corollary
\ref{cor:main cor} should be viewed as negative mass theorems under
these assumptions, which are reasonable. Furthermore, it is interesting
to interpret the contribution of isolated singularities, which are
right hand side terms of (\ref{eq:M1}) and (\ref{eq:M2}). We will,
however, leave any possible physics implication of our results to
experts.

From a geometric point of view, we study metrics in the so-called
negative cone, which means that in our settings the scalar curvature,
$R_{g},$ is strictly negative. Comparing to results in the positive
cone case, there are relatively few works for the negative cone case.
See \cite{MP,GV1,GG,GLL}. The difficulty for the negative cone is
mainly due to the lack of interior $C^{2}$ estimate, which plays
a significant role in fully nonlinear elliptic equations including
the $\sigma_{k}$ Yamabe problem. The counterexample of interior $C^{2}$
estimate has been constructed by Sheng-Trudinger-Wang \cite{STW}.
Our result can be viewed as a necessary condition in further study
of general $\sigma_{2}$ Nirenberg type problem in a similar setting.

Also, the geometry of sharp cases of our inequalities is first described
in Chang-Han-Yang \cite{CHY}. In particular, we are able to characterize
the standard hyperbolic space, $\mathbb{H}^{4}$, among smooth conformal
AH balls in dimension 4 with a positive $\sigma_{2}$ curvature condition.
Comparing Corollary \ref{cor:main cor} to the Chang-Gursky-Yang's
conformal 4-sphere theorem \cite{CGY1}, it is interesting to see
that $\sigma_{2}$ curvature in dimension 4 carries particularly strong
conformal geometric information to characterize space forms.

From an analytical point of view, our approach to prove Theorem \ref{thm:main}
is heavily relying on our previous work \cite{FW1}, where the $\sigma_{2}$
Yamabe problem is studied for conic 4-spheres. Instead of $\sigma_{2}$
curvature being a positive constant, which is discussed in \cite{FW1},
we find out that in the current negative cone setup, $\sigma_{2}$
curvature positive lower bound condition (\ref{eq:>}) can be used
to construct a quasi-local mass along level sets of the conformal
factor, which is similar to geometric flow methods considered by in
\cite{HI}. The monotonicity of the new quasi-local mass is established
using the delicate divergence structure of the $\sigma_{2}$ curvature
in dimension 4 and the iso-perimetric inequality for Euclidean spaces.
Generalization to non-conformally flat and higher dimensional cases
will be difficult but interesting.

The rest of the paper is organized as follows. In Section 2, we discuss
the Chang-Han-Yang ODE model for constant $\sigma_{2}$ curvature
metric and derive our main result in the special case where rotational
symmetry is assumed. In Section 3, we follow construction in \cite{FW1}
to define a quasi-local mass along level sets of the conformal factor
and prove its monotonicity. In Section 3, we study asymptotic behaviors
of our quasi-local mass near naked singular points as well as near
the hyperbolic end. As a consequence, we derive our main theorems.

The first named author would like to thank Xiao Zhang for valuable
discussion on topics in general relativity. Both authors would like
to thank Pedro Valentin De Jesus, Mijia Lai and Biao Ma for discussion.
Both authors would thank anonymous referees for suggestions and corrections
that have improved the accuracy and readability of our article.

\section{Chang-Han-Yang model and related analysis}

In this section, we first discuss the Chang-Han-Yang model of asymptotically
hyperbolic manifolds with constant $\sigma_{2}$ curvature. Then,
we briefly discuss a special case of our main result, when the conformal
factor $u$ is rotationally symmetric with respect to the origin.
And under this case, our non-linear problem gets greatly simplified.
This serves also as the model case of our analysis. Note that the
symmetry assumption on $u$ indicates the existence of possible one
singular point at the origin. It also includes the case when no singularity
exists.

First we have the following computation of $\sigma_{2}$ curvature
under the rotational symmetry condition $u(x)=u(r)$. Using the variable
$s=\log\frac{1}{r}$, $s>0$, and $v(s)=u(s)-s,$ we have the following
local metric 
\[
g=\exp(2v)(ds^{2}+dg_{S^{3}})
\]
with 
\[
\sigma_{2}(g^{-1}A_{g})=\frac{3}{2}(v_{s}^{2}-1)v_{ss}e^{-4v}.
\]

The Chang-Han-Yang model, first given in \cite{CHY}, is represented
by the solution of $\sigma_{2}=\frac{3}{2}$ under this setting, which
can be written as 
\begin{equation}
(v_{s}^{2}-1)v_{ss}=e^{4v}.\label{eq:CHY}
\end{equation}
It is easy to see that (\ref{eq:CHY}) has the following first integral:
\begin{equation}
(v_{s}^{2}-1)^{2}-e^{4v}=k^{2},\label{eq:first integral}
\end{equation}
for some non-negative constant $k$. We summarize properties of Chang-Han-Yang
solution in the following 
\begin{lem}
(\ref{eq:first integral}) has a solution, called the Chang-Han-Yang
AH solution, such that 
\end{lem}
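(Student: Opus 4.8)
The plan is to read (\ref{eq:first integral}) as a first order autonomous ODE and extract all the asserted properties by separation of variables together with matched asymptotics at the two ends $s\to 0^{+}$ and $s\to+\infty$. First I would solve (\ref{eq:first integral}) for $v_{s}$. Since the hyperbolic profile $v=-\log\sinh s$ (the case $k=0$) satisfies $v_{s}=-\coth s<0$ and $v_{s}^{2}-1=1/\sinh^{2}s>0$, I select by continuity the branch
\begin{equation}
v_{s}=-\sqrt{1+\sqrt{e^{4v}+k^{2}}},\label{eq:branch}
\end{equation}
and note that the right-hand side satisfies $v_{s}^{2}\geq 1+k$ and never vanishes, so this branch is preserved along the flow and $v$ is strictly decreasing. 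Separating variables in (\ref{eq:branch}) expresses $s$ as a decreasing integral of $v$; since the integrand behaves like $e^{-v}$ as $v\to+\infty$ (a convergent tail) and like $(1+k)^{-1/2}$ as $v\to-\infty$ (a divergent tail), the solution exists, is unique up to translation in $s$, and sweeps out $v\in(-\infty,+\infty)$ as $s$ runs over a maximal interval which I normalize to $(0,+\infty)$.

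Next I would analyze the asymptotically hyperbolic end $s\to 0^{+}$, where $v\to+\infty$. Writing $\phi:=v+\log\sinh s$ for the deviation from the hyperbolic profile and substituting $v=-\log\sinh s+\phi$ into (\ref{eq:first integral}), the $\sinh^{-4}s$ terms cancel; linearizing in $\phi$ and using $\coth s\sim 1/s$, $\sinh^{-2}s\sim 1/s^{2}$ near $0$, I obtain the reduced equation
\begin{equation}
-s\,\phi_{s}-\phi=\tfrac{k^{2}}{4}s^{4}+o(s^{4}).\label{eq:reduced}
\end{equation}
The homogeneous solutions of (\ref{eq:reduced}) are multiples of $s^{-1}$, which must be discarded in order to match the asymptotic form prescribed in Definition \ref{def:AHS}, and the regular particular solution is $\phi=-\tfrac{k^{2}}{20}s^{4}+o(s^{4})$. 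Thus the Chang-Han-Yang solution has the expansion $v=-\log\sinh s-\tfrac{k^{2}}{20}s^{4}+o(s^{4})$, so that by Definition \ref{def:m2} the rotationally symmetric mass is
\begin{equation}
m_{2}=-\frac{k^{2}}{20}.\label{eq:masscomp}
\end{equation}

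I would then treat the singular end $s\to+\infty$, where $v\to-\infty$ and $e^{4v}\to 0$. Passing to the limit in (\ref{eq:first integral}) gives $(v_{s}^{2}-1)^{2}\to k^{2}$, whence $v_{s}\to-\sqrt{1+k}$, so $v\sim-\sqrt{1+k}\,s$ and $u=v+s\sim(\sqrt{1+k}-1)\log r$ near the origin. This exhibits a conic singularity at $0$ with cone parameter $\beta=\sqrt{1+k}-1$, equivalently $k=\beta(\beta+2)$. Combining this relation with (\ref{eq:masscomp}) yields $-m_{2}=k^{2}/20=\beta^{2}(\beta+2)^{2}/20$, which matches the sharp value recorded in (\ref{eq:M2}).

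The delicate part is the asymptotic expansion at the AH end: one must justify rigorously that the remainder in (\ref{eq:reduced}) is genuinely $o(s^{4})$ and that no intermediate powers $s,\,s^{2},\,s^{3}$ enter $\phi$. I expect to handle this by a bootstrap on the integral formulation of (\ref{eq:branch}), controlling the nonlinear contributions $e^{4\phi}-1=O(\phi)$ and $\phi_{s}^{2}$ order by order, or alternatively by comparison with sub- and supersolutions of the form $-\log\sinh s+(-\tfrac{k^{2}}{20}\pm\varepsilon)s^{4}$. The remaining claims (existence, monotonicity, and the two-end behavior) then follow directly from the phase-line analysis of (\ref{eq:branch}).
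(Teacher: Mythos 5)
The paper does not actually prove this lemma --- it is stated as a summary of properties of the Chang--Han--Yang model and deferred to \cite{CHY} --- so there is no in-paper argument to compare against. Your proposal is a correct and essentially complete way to establish all four claims. The branch selection $v_{s}=-\sqrt{1+\sqrt{e^{4v}+k^{2}}}$ is the right one (it reproduces $v_s=-\coth s$ when $k=0$ and gives $v_s\le -\sqrt{1+k}\le -1$, while $v_{ss}=e^{4v}/(v_s^2-1)>0$ follows by differentiating the first integral); the quadrature $s(v)=\int_{v}^{\infty}(1+\sqrt{e^{4w}+k^{2}})^{-1/2}\,dw$ gives existence on a maximal interval normalized to $(0,\infty)$ with the stated two-end behavior; and your linearization at the AH end correctly produces $-s\phi_s-\phi=\tfrac{k^2}{4}s^4+o(s^4)$ with particular solution $-\tfrac{k^2}{20}s^4$, matching item (2) and hence $m_2=-k^2/20$. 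Your identification $\beta=\sqrt{1+k}-1$, i.e.\ $k=\beta(\beta+2)$, is exactly what makes the lemma consistent with the sharp constant in (\ref{eq:M2}) and with (\ref{eq:add2}).

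The one gap you flag yourself --- justifying that the error at the AH end is genuinely $O(s^5)$ (the lemma claims $O(s^5)$, not merely $o(s^4)$) and that no homogeneous $c/s$ term or intermediate powers appear --- is real but closes cleanly: since the equation is first order and autonomous, the only freedom is translation in $s$, whose infinitesimal generator is precisely the homogeneous mode $v_s\sim -1/s$; normalizing the blow-up to occur exactly at $s=0$ kills it. The expansion itself is then most painlessly extracted not by bootstrapping the ODE but by expanding the explicit integral $s(v)$ above (or equivalently $s$ as a function of $e^{v}$) near $v=+\infty$, where all error terms are under explicit control; this avoids any sub/supersolution argument. With that substitution your outline becomes a complete proof.
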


\begin{enumerate}
\item for $s\in[0,\infty)$, $v_{s}<-1$ and $v_{ss}>0;$ 
\item when $s\to0^{+}$, $v(s)\to\infty,$ and $v_{s}\to-\infty$. Furthermore,
when $s\to0^{+},$ there is $k\geq0$ such that 
\[
v(s)=-\log\sinh s-\frac{k^{2}}{20}s^{4}+O(s^{5});
\]
\item when $s\to\infty,$ $v(s)\to-\infty$, and $v_{s}\to-\sqrt{k+1}$; 
\item $(D,e^{2u}g_{E})$ is asymptotically hyperbolic and has $m(M,e^{2u}g_{E})=-\frac{k^{2}}{20}.$ 
\end{enumerate}
Now we discuss a rotationally symmetric metric satisfying Definition
\ref{def:AHS}. Assume that $\sigma_{2}\geq\frac{3}{2}$, which means
\begin{equation}
(v_{s}^{2}-1)v_{ss}\geq e^{4v}.\label{eq:b1}
\end{equation}
Considering the fact that $g$ is asymptotically hyperbolic, it is
clear to see that 
\[
v_{s}^{2}>1,\ \ v_{ss}>0.
\]
Noting the asymptotic behavior of $v_{s}$ as $s\to\infty,$ 
\[
\lim_{s\to\infty}v_{s}(s)=-(\beta+1),
\]
where $\beta\geq0$. Since $v_{ss}>0$, for $s>0,$ 
\begin{equation}
v_{s}\leq-\beta-1\leq-1.\label{eq:b2}
\end{equation}
Considering (\ref{eq:b1}) and (\ref{eq:b2}), we have 
\begin{equation}
4v_{s}(v_{s}{}^{2}-1)v_{ss}\leq4v_{s}e^{4v}.\label{eq:b3}
\end{equation}
Integrating (\ref{eq:b3}) and considering the boundary condition,
we get 
\begin{equation}
(v_{s}{}^{2}-1)^{2}-e^{4v}\geq\beta^{2}(\beta+2)^{2}.\label{eq:add1}
\end{equation}
Due to the rotational symmetry of $v$, $f$ in (\ref{asymp-assumption})
is now a constant. Thus, 
\begin{equation}
v(s)=-\log\sinh s+fs^{4}+o(s^{4}).\label{eq:b4}
\end{equation}
Now consider the following 
\[
m(s)=\frac{1}{20}[(v'^{2}-1)^{2}-e^{4v}]
\]
and its asymptotic behavior as $s\to0.$ A direct computation shows
that 
\[
\lim_{s\to0^{+}}m(s)=\lim_{s\to0}\frac{1}{20}\big\{[(-\frac{\cosh s}{\sinh s}+4fs^{3})^{2}-1]^{2}-\frac{e^{4fs^{4}}}{\sinh^{4}s}\big\}=-f=-m(M,g).
\]
Thus, from (\ref{eq:add1}) we have the following 
\begin{equation}
-m(M,g)\geq\frac{\beta^{2}(\beta+2)^{2}}{20}.\label{eq:add2}
\end{equation}
It is then clear to see that when the equality in (\ref{eq:add2})
holds, $u$ has to satisfy (\ref{eq:CHY}). In particular, when $\beta=0,$
we have obtained the standard metric on $\mathbb{H}^{4}.$

\section{Quasi-local mass via level sets}

In this section we discuss general conformally flat AH spaces in dimension
4. In particular, on manifolds with positive $\sigma_{2}$ curvature
lower bound, we define a quasi-mass quantity and prove that it is
monotone. This construction has been actually discussed in our earlier
work \cite{FW1} in a different setup. In the asymptotically hyperbolic
case, the corresponding Schouten curvature falls into the so-called
negative cone, which means $R_{g}<0$ everywhere. However, basic ideas
in \cite{FW1} still apply here. 

We start by discussion about the critical set of the conformal factor.
In this section, we suppose that $u(x)\in C^{2}(D\backslash\{p_{1,},\cdots,p_{k}\})$.
Let 
\[
\mathcal{C}=\{x\in D\backslash\{p_{1},\cdots,p_{k}\};\ \nabla u(x)=0\}
\]
 be the critical set of $u$.
\begin{lem}
\label{lem:hausdorff measure 0}Notations as above. If $\sigma_{2}(g_{u}^{-1}A_{g_{u}})$
is never vanishing,  then $\mathcal{C}$ has at most Hausdorff dimension
2.
\end{lem}

\begin{proof}
For any $P\in\mathcal{C}$, we pick a local coordinate $\{y^{1},\cdots,y^{4}\}$
on an small open set $U$ such that $P\in U\subset\mathbb{R}^{4}$,
and denote $u_{i}=\frac{\partial u}{\partial y^{i}}$ and $u_{ij}=\frac{\partial^{2}u}{\partial y^{i}\partial y^{j}}$.
Since $\sigma_{2}(g^{-1}A_{g})(P)\neq0$ and $|\nabla u|(P)=0$, we
have $\sigma_{2}(\nabla^{2}u)(P)\neq0.$ Thus, there exist $i,j\in\{1,2,3,4\}$,
$i\neq j$, such that $u_{ii}u_{jj}-u_{ij}^{2}|_{P}\neq0$. Hence,
$\nabla u_{i}(P)$ and $\nabla u_{j}(P)$ are linearly independent.
By the implicit function theorem, the set $\{x,\frac{\partial u}{\partial y^{i}}=0\,\text{and}\,\frac{\partial u}{\partial y^{j}}=0\}$
is locally smooth and of dimension 2 near $P$. Shrinking $U$ if
necessary, we have $\mathcal{C\cap}U\subset\{x,\frac{\partial u}{\partial y^{i}}=0\,\text{and}\,\frac{\partial u}{\partial y^{j}}=0\}$.
We have thus concluded our proof.
\end{proof}
For the rest of this section, consider a smooth function $u(x)\in C^{2}(D\backslash\{p_{1,},\cdots,p_{k}\})$
that satisfies conditions posed in Definition \ref{def:AHS}. We also
assume that $\sigma_{2}(g_{u}^{-1}A_{g_{u}})$ is non-vanishing. We
define $t_{0}=\inf_{D\backslash\{p_{1,},\cdots,p_{k}\}}u$. Note that
when $k\geq1,$ $t_{0}=-\infty.$ For any $t>t_{0},$ define the following
\begin{align*}
S(t) & =\{x\in D\backslash\{p_{1,},\cdots,p_{k}\},\ u(x)<t\},\\
S^{*}(t) & =S(t)\cup\{p_{1,},\cdots,p_{k}\}.
\end{align*}
It is clear that, by Definition \ref{def:AHS}, both $S(t)$ and $S^{*}(t)$
are non-empty, open and bounded. We fix the following notations for
the rest of the article:
\begin{align*}
\partial S(t) & =\overline{S(t)}\backslash S(t),\\
\partial S^{*}(t) & =\overline{S^{*}(t)}\backslash S^{*}(t),\\
L(t) & =\{x\in D,\ u(x)=t\},\\
L(t)_{0} & =L(t)\backslash\mathcal{C}.
\end{align*}
 
\begin{lem}
\label{lem:boudary}Notations and assumptions as above. For any $t>t_{0},$
$\partial S(t),$$\partial S^{*}(t)$ and $L(t)$ differ by 0-measure
sets in 3 dimesional Hausdorff measure ($\mathcal{H}^{3})$ sense.
\end{lem}

\begin{proof}
It is obvious that $\partial S(t)\backslash\text{\ensuremath{\partial S^{*}(t)}}=\{p_{1,},\cdots,p_{k}\}$,
which has $\mathcal{H}^{3}$ measure 0. Since $u$ is $C^{2}$ smooth,
it is easy to see that
\begin{equation}
\partial S^{*}(t)\subset L(t).\label{eq:k1}
\end{equation}
\begin{equation}
L(t)_{0}\subset\partial S^{*}(t).\label{eq:k2}
\end{equation}
By (\ref{eq:k1}) and (\ref{eq:k2}), 
\[
(\partial S^{*}(t)\backslash L(t))\cup(L(t)\backslash\partial S^{*}(t)\subset\mathcal{C},
\]
which, by Lemma \ref{lem:hausdorff measure 0}, has measure 0 in $\mathcal{H}^{3}$
sense. We have finished the proof.
\end{proof}
\begin{rem}
\label{rem:nonzeromeasure}By Definition \ref{def:AHS}, for $t>t_{0},$
$S^{*}(t)$ is non-empty, open and bounded. By the isoperimetric inequality
\cite{osserman}, $\partial S^{*}(t)$ has non-trivial 3-dimensional
Minkovski content. By Lemma \ref{lem:hausdorff measure 0}, $\mathcal{C}$
is locally a subset of 2-dimenional surface with trivial 3-dimensional
Minkovski content. Thus, for all $t>t_{0}$, $L(t)_{0}=L(t)\backslash\mathcal{C}$
has non-trivial 3-dimensional Minkovski content, which means that
it is non-empty. Since $L(t)_{0}$ is locally a 3-dimensional hypersurface,
it has non-trivial $\mathcal{H}^{3}$ measure. 
\end{rem}

We proceed to discuss a local coordinate near a generic point $P\in L(t)_{0}$
near which $L(t)$ is smooth. We first define one particular coordinate
function
\[
y^{4}(Q)={\rm sgn}(u(Q)-t){\rm dist}(Q,L(t))
\]
for $Q$ near $P.$ Note that this is well defined since $L(t)$ is
smooth near $P$. We also define local normal coordinate functions
$y^{1},y^{2},y^{3}$ on an open set $V\subset L(t)$ near $P$ and
then extend them smoothly to an open set $U\subset D$. Thus, we have
got a local coordinate system $\{y^{i}\},$ $i=1,\cdots,4$ of $\mathbb{R}^{4}$
near $P$ such that $<\frac{\partial}{\partial y^{i}},\frac{\partial}{\partial y^{j}}>|_{P}=\delta_{ij}$.
Note that, due to the choice of $y^{4},$ we have $<\frac{\partial}{\partial y^{4}},\frac{\partial}{\partial y^{4}}>|_{V}=1$
and $<\frac{\partial}{\partial y^{i}},\frac{\partial}{\partial y^{4}}>|_{V}=0$
for $i=1,2,3$.

We use $\nabla$ to denote the Levi-Civita connection of $g_{E}$
and write $u_{i}=\nabla_{\frac{\partial}{\partial y^{i}}}u$ and $u_{ij}=\nabla_{ij}u=\nabla_{i}\nabla_{j}u$.
By the definition of $y^{4},$ $\frac{\partial}{\partial y^{4}}|_{V}=\frac{\nabla u}{|\nabla u|}$.
We note that $u_{44}$ is independent of choices of $y^{1}$, $y^{2}$
and $y^{3}$ and well defined for generic points in $L(t)$. Let $\nabla_{ab}^{L}u$
be the Hessian of $u$ with respect to the induced metric on $L(t)$.
In the following, $\alpha,\,\beta$ range from 1 to $3$. By the definition
of $L(t),$ we have $\nabla_{\alpha}^{L}u=0$.

Let $h_{\alpha\beta}$ be the second fundamental form of the level
set $L(t)$ with respect to the outward normal vector $\frac{\nabla u}{|\nabla u|}$.
We have the following Gauss-Weingarten formula

\begin{equation}
\nabla_{\alpha\beta}u=\nabla_{\alpha\beta}^{L}u+h_{\alpha\beta}u_{4}.\label{eq:gauss-weingarten1}
\end{equation}

We may now describe the Schouten tensor using our choice of local
coordinates near $P$. Recall that for the Schouten tensor $A=A_{g},$
\[
A_{ij}=-u_{ij}+u_{i}u_{j}-\frac{|\nabla u|^{2}}{2}\delta_{ij},
\]
and near $P$, we write $g_{E}^{-1}A_{g}$ locally as a symmetric
matrix

\begin{equation}
g_{E}^{-1}A_{g}=\left(\begin{array}{cccc}
 &  &  & -\nabla_{41}u\\
 & -h_{\alpha\beta}|\nabla u|-\frac{|\nabla u|^{2}}{2}\delta_{\alpha\beta} &  & -\nabla_{42}u\\
 &  &  & -\nabla_{43}u\\
-\nabla_{41}u & -\nabla_{42}u & -\nabla_{43}u & -\nabla_{44}u+\frac{|\nabla u|^{2}}{2}
\end{array}\right).\label{eq:expression of A}
\end{equation}
For simplicity, we define a local symmetric $3\times3$ matrix 
\[
\widetilde{A}(P):=(-h_{\alpha\beta}|\nabla u|-\frac{|\nabla u|^{2}}{2}\delta_{\alpha\beta}).
\]
We also define, for $L(u(P))_{0}$ at $P$, we define the unit normal
vector $\nu=\frac{\nabla u}{|\nabla u|}$ and a direct computation
shows that the corresponding mean curvature 
\begin{equation}
H={\rm div}(\frac{\nabla u}{|\nabla u|}).\label{eq:H}
\end{equation}

For future use, we establish the following key point-wise results,
which is a consequence of the asymptotic hyperbolic condition.
\begin{lem}
\label{lem:pointwise}Let $(M,g)=(D\backslash\{p_{1},\cdots,p_{k}\},\exp(2u(x))g_{E})$
be a conformal asymptotically hyperbolic space of dimension 4 with
cone-like singularities defined as in Definition \ref{def:AHS}, if
$\sigma_{2}(A_{g})>0,$ then for any $P\in D\backslash\{p_{1},\cdots,p_{k}\}\backslash\mathcal{C},$
we have 
\[
{\rm div}(|\nabla u|^{2}\nabla u)=3|\nabla u|^{2}[\nabla_{44}u+\frac{H}{3}|\nabla u|]>0,
\]
\[
\sigma_{1}(\widetilde{A})=-H|\nabla u|-\frac{3}{2}|\nabla u|^{2}<0.
\]
\end{lem}

\begin{proof}
We may use (\ref{eq:expression of A}) and (\ref{eq:H}) to establish
identities by direct computation. Furthermore, we compute
\begin{align}
0<\sigma_{2}(A) & =\sigma_{2}(\widetilde{A})+(-\nabla_{44}u+\frac{|\nabla u|^{2}}{2})\sigma_{1}(\widetilde{A})-\sum_{a=1}^{3}(\nabla_{a4}u)^{2}\nonumber \\
 & \le\sigma_{2}(\widetilde{A})+(-\nabla_{44}u+\frac{|\nabla u|^{2}}{2})\sigma_{1}(\widetilde{A}).\label{eq:key1}
\end{align}
Note also that 
\begin{equation}
\sigma_{2}(\widetilde{A})\le\frac{\sigma_{1}^{2}(\widetilde{A})}{3}.\label{eq:key2}
\end{equation}
We then combine (\ref{eq:key1}) and (\ref{eq:key2}) to conclude
\begin{align}
\sigma_{2}(A) & \le\frac{\sigma_{1}(\widetilde{A})\sigma_{1}(\widetilde{A})}{3}+(-\nabla_{44}u+\frac{|\nabla u|^{2}}{2})\sigma_{1}(\widetilde{A})\nonumber \\
 & =\sigma_{1}(\widetilde{A})(-\frac{H}{3}|\nabla u|-\nabla_{44}u).\label{eq:add7}
\end{align}
By Lemma \ref{lem:hausdorff measure 0}, $D\backslash\{p_{1},\cdots,p_{k}\}\backslash\mathcal{C}$
is open and connected, which means both factors at the right hand
side of (\ref{eq:add7}) do not change signs. Using condition (2)
of Definition \ref{def:AHS}, we may directly verify that both quantities
are negative when $P$ is near the boundary of $D$. Thus, they are
always negative, which leads to inequality parts of the lemma.
\end{proof}
Next, we define some integral quantities. From now on we use $\fint_{L(t)}$
, $\fint_{S(t)}$ to represent $\frac{1}{|S^{3}|}\varoint_{L(t)}$,
$\frac{1}{|S^{3}|}\int_{S(t)}$ , respectively, where $|S^{3}|$ is
the volume of the unit 3-sphere and $|B^{4}|$ is the volume of the
unit ball in $\mathbb{R}^{4}$. Note that $\frac{|S^{3}|}{4}=|B^{4}|$
. We will use the standard Euclidean measure $dx$, and its induced
hyper-surface measure, $dl=dl_{t}$ on $L(t)_{0}$. We also omit them
if no confusion arises. We now define following quantities for all
$t>t_{0}$,
\[
A(t)=\fint_{S(t)}e^{4u}dx,
\]
\[
B(t)=\fint_{S(t)}dx,
\]

\[
C(t)=e^{4t}B(t).
\]

We remark here that we use a slightly different sign convention here
comparing to definitions given in \cite{FW1}, but the construction
is essentially same. When $t_{0}=\inf u$ is finite, then $u\in C^{2}(D)$
has no interior singular points, and we define $A(t_{0})=B(t_{0})=C(t_{0})=0.$
Thus, $A,B,C$ are defined for all $t\in[t_{0},\infty).$
\begin{lem}
\label{lem:ACont-1}Notations as above. Functions $A(t),B(t),C(t)$
are absolutely continuous. 
\end{lem}

\begin{proof}
We follow similar arguments in \cite{FL,FW1}. For completeness, we
sketch a proof here. By the co-area formula (see Lemma 2.3 in \cite{BZ})
and Lemma \ref{lem:hausdorff measure 0}, for any $t_{2}>t_{1}\geq t_{0},$
\[
B(t_{2})-B(t_{1})=\frac{|\mathcal{C}\cap u^{-1}(t_{1},t_{2}]|}{|S^{3}|}+\int_{t_{1}}^{t_{2}}\fint_{L(\tau)_{0}}\frac{1}{|\nabla u|}d\mathcal{H}d\tau=\int_{t_{1}}^{t_{2}}\fint_{L(\tau)_{0}}\frac{1}{|\nabla u|}d\mathcal{H}d\tau,
\]
which by the fundamental theorem of Lebesgue integral means that $B(t)$
is absolutely continuous. Thus, $C(t)=e^{4t}B(t)$ is also absolutely
continuous. A similar argument shows that $A(t)$ is absolutely continuous.
\end{proof}
\begin{cor}
\label{cor:finite perimeter}Notations and assumptions as above. There
exists a dense set $\mathcal{T}\subset[t_{0},+\infty)$ such that
$[t_{0},+\infty)\backslash\mathcal{T}$ has Lebesgue measure 0 and
for any $t\in\mathscr{\mathcal{T}},$
\begin{align*}
A'(t) & =e^{4t}\ \fintop_{L(t)_{0}}\frac{1}{|\nabla u|},\\
B'(t) & =\fintop_{L(t)_{0}}\frac{1}{|\nabla u|},\\
C'(t) & =4C+A'.
\end{align*}
In particular, $0<|L(t)_{0}|$ is finite for all $t\in\mathcal{T}.$
\end{cor}

\begin{proof}
The existence of $\mathcal{T}$and identities in the claim can be
derived from the co-area formula and direct computations. See also
\cite{FL,FW1}. In particular, for $t\in\mathcal{T},$ $B'(t)$ is
finite. Noting that $\frac{1}{|\nabla u|}$ is uniformly bounded from
below on the closed set $L(t)\supset L(t)_{0}$, we conclude that
$|L(t)|=|L(t)_{0}|=\int_{L(t)_{0}}1$ is finite for $t\in\mathcal{T}$.
\end{proof}
Now we define a few more quantities. For any $t\in\mathcal{T},$ by
Lemma \ref{lem:boudary}, Remark \ref{rem:nonzeromeasure} and Corollary
\ref{cor:finite perimeter}, $L(t)_{0}=L(t)\backslash\mathcal{C}$
has finite, non-trivial $\mathcal{H}^{3}$ measure, and we define:
\begin{align*}
z(t) & =(\fint_{L(t)_{0}}|\nabla u|^{3}dl)^{\frac{1}{3}},\\
D(t) & =\frac{1}{4}\fint_{L(t)_{0}}[2H|\nabla u|^{2}+2|\nabla u|^{3}]dl.
\end{align*}

\begin{lem}
\label{lem: derivative} Notations as above. Suppose $\sigma_{2}(g_{u}^{-1}A_{u})$
never vanishes. For all $t_{1},t_{2}\in\mathcal{T}$ such that $t_{1}>t_{2}$,
we have
\begin{align}
D(t_{1}) & =D(t_{2})+\fintop_{S(t_{1})\backslash S(t_{2})}\sigma_{2}(g^{-1}A_{g})e^{4u}dx,\label{eq:DD}\\
z^{3}(t_{1}) & =z^{3}(t_{2})+\fintop_{S(t_{1})\backslash S(t_{2})}{\rm div}(|\nabla u|^{2}\nabla u)\ dx.\label{eq:zz}
\end{align}
\end{lem}

\begin{proof}
We follow a similar argument in \cite{FW1}. For any $t\in\mathcal{T},$
by Remark \ref{rem:nonzeromeasure} and Corollary \ref{cor:finite perimeter},
$0<\mathcal{H}^{3}(L(t)_{0})<\infty.$ By the generalized Gauss-Green
theorem (see Theorem 1 in Chapter 5.8 of \cite{EG}), Lemma \ref{lem:hausdorff measure 0},
Lemma \ref{lem:boudary} and the divergence structure of $\sigma_{2}$,
we know that for $t_{1}>t_{2},$ $t_{1,}t_{2}\in\mathcal{T},$

\begin{align*}
 & \int_{S(t_{1})\backslash S(t_{2})}\sigma_{2}\left(g_{u}^{-1}A_{g_{u}}\right)e^{4u}dx\\
= & \frac{1}{4}\int_{L(t_{1})_{0}}\left[2H|\nabla u|^{2}+2|\nabla u|^{3}\right]-\frac{1}{4}\int_{L(t_{2})_{0}}\left[2H|\nabla u|^{2}+2|\nabla u|^{3}\right]\\
= & |S^{3}|(D(t_{1})-D(t_{2})),
\end{align*}
where for each $i=1,2,$ on $L(t_{i})_{0}$, $\nu=\frac{\nabla u}{|\nabla u|}$
and $H|\nabla u|^{2}={\rm div}(\frac{\nabla u}{|\nabla u|})|\nabla u|^{2}$
is well defined. We have proved (\ref{eq:DD}). 

Similarly, by the generalized Gauss-Green theorem, for $t_{1}>t_{2},$
$t_{1,}t_{2}\in\mathcal{T},$we compute that
\begin{align*}
\int_{S(t_{1})\backslash S(t_{2})}{\rm div}(|\nabla u|^{2}\nabla u) & =\int_{L(t_{1})_{0}}|\nabla u|^{2}<\nabla u,\nu>-\int_{L(t_{2})_{0}}|\nabla u|^{2}<\nabla u,\nu>\\
 & =\int_{L(t_{1})_{0}}|\nabla u|^{3}-\int_{L(t_{2})_{0}}|\nabla u|^{3}\\
 & =|S^{3}|(z(t_{1})^{3}-z(t_{2})^{3}),
\end{align*}
where for each $i=1,2,$ on $L(t_{i})_{0}$, $\nu=\frac{\nabla u}{|\nabla u|}$.
We have proved (\ref{eq:zz}). 
\end{proof}

We may now extend definitions of functions $z$ and $D$ to all $t\in[t_{0},+\infty)$.
Again, if $t_{0}>-\infty,$ we define $z(t_{0})=D(t_{0})=0.$ Consider
any $t>t_{0}$ and $t\not\in\mathcal{T}.$ Since $\mathcal{T}$ is
dense in $[t_{0},\infty),$ there exists $t'\in\mathcal{T}$ such
that $t'<t.$ We define
\begin{align*}
D(t) & =D(t')+\fintop_{S(t)\backslash S(t')}\sigma_{2}(g^{-1}A_{g})e^{4u}dx;\\
z(t) & =[z(t')^{3}+\fintop_{S(t)\backslash S(t')}{\rm div}(|\nabla u|^{2}\nabla u)\ dx]^{\frac{1}{3}}.
\end{align*}

\begin{rem}
\label{rem:full definition}Due to Lemma \ref{lem: derivative}, the
above definitions are independent of choice of $t'\in\mathcal{T}$.
In particular, (\ref{eq:DD}) and (\ref{eq:zz}) now hold for all
$t_{1}>t_{2}$, $t_{1},t_{2}\in[t_{0},+\infty).$
\end{rem}

\begin{lem}
\label{lem:ACont}Notations as above. Functions $D(t)$ and $z(t)$
are absolutely continuous. Furthermore, for a.e. $t\in[t_{0},+\infty)$
\begin{align*}
D'(t) & =\fintop_{L(t)_{0}}\frac{\sigma_{2}(g^{-1}A_{g})e^{4u}}{|\nabla u|},\\
z'(t) & =\frac{1}{3z^{2}}\fintop_{L(t)_{0}}(H|\nabla u|+3\nabla_{44}u)|\nabla u|,
\end{align*}
where $H$ is the mean curvature of $L(t)_{0}.$
\end{lem}

\begin{proof}
We apply (\ref{eq:zz}), the co-area formula, Lemma \ref{lem:hausdorff measure 0}
and Lemma \ref{lem:pointwise} to get, for any $t_{1}>t_{2}\geq t_{0}$
\begin{align}
 & z^{3}(t_{1})-z^{3}(t_{2})\nonumber \\
 & =\fint_{S(t_{1})\backslash S(t_{2})}{\rm div}(|\nabla u|^{2}\nabla u)\nonumber \\
 & =\fint_{\{t_{2}<u\le t_{1}\}\cap\mathcal{C}}{\rm div}(|\nabla u|^{2}\nabla u)+\int_{t_{2}}^{t_{1}}\fint_{(L_{\tau})_{0}}\frac{\mathrm{div}(|\nabla u|^{2}\nabla u)}{|\nabla u|}d\mathcal{H}d\tau\nonumber \\
 & =\int_{t_{2}}^{t_{1}}\fint_{(L_{\tau})_{0}}\frac{\mathrm{div}(|\nabla u|^{2}\nabla u)}{|\nabla u|}d\mathcal{H}d\tau=\int_{t_{2}}^{t_{1}}\fint_{(L_{\tau})_{0}}(H|\nabla u|+3\nabla_{44}u)|\nabla u|d\mathcal{H}d\tau\geq0.\label{eq:z increasing}
\end{align}
Thus by the fundamental theorem of Lebesgue integral calculus, $z^{3}(t)$
is absolutely continuous and non-decreasing. To prove that $z(t)$
is absolutely continuous, we only need to show that 
\begin{equation}
z(t)>0\label{eq:z>0}
\end{equation}
for all $t>t_{0}.$ By Remark \ref{rem:nonzeromeasure}, $z(t)>0$
for $t\in\mathcal{T}$, which, combined with the monotonic property
of $z$ (\ref{eq:z increasing}), leads to (\ref{eq:z>0}). 

Using (\ref{eq:DD}), a similar argument can be made to establish
the absolute continuity of $D(t)$ and compute its derivative.
\end{proof}
\begin{lem}
\label{lem:inequality}Notations as above. If we further assume that
$\sigma_{2}(g^{-1}A_{g})\geq\frac{3}{2},$ then for a.e. $t\in[t_{0},+\infty)$,
$D'(t)\geq\frac{3}{2}A'(t).$
\end{lem}

\begin{proof}
This is a direct consequence of Corollary \ref{cor:finite perimeter}
and Lemma \ref{lem:ACont}.
\end{proof}
Finally we are ready to define our quasi-local mass.
\begin{defn}
\label{def:quasi-local mass}For any $t\in[t_{0},\infty),$ we define 
\end{defn}

\[
m(t)=\frac{1}{5}[\frac{2}{3}D(t)+\frac{4}{9}D(t)z(t)+\frac{1}{36}z^{4}(t)-C(t)].
\]

\begin{prop}
\label{prop:nice m}Let $(M,g)$ be a conformally flat asymptotically
hyperbolic 4-manifold with singularities. Suppose $\sigma_{2}(g_{u}^{-1}A_{u})$
never vanishes in $D$. Then $m(t)$ is absolutely continuous. 
\end{prop}

\begin{proof}
This follows from Definition \ref{def:quasi-local mass}, Lemma \ref{lem:ACont-1}
and Lemma \ref{lem:ACont}.
\end{proof}
We proceed to establish the following crucial monotonic result, which
is a slight modification of Theorem 15 in \cite{FW1}. 
\begin{thm}
\label{monotone}Let $(M,g)$ be a conformally flat asymptotically
hyperbolic 4-manifold with possible singularities. We use notations
given as above. Suppose $\sigma_{2}(g_{u}^{-1}A_{u})\ge\frac{3}{2}$.
Then $m(t)$ is non-decreasing with respect to $t.$ That is, we have
a.e. $t\in[t_{0},\infty)$, 
\[
m'(t)\geq0.
\]
\end{thm}

\begin{proof}
This proof is similar to that of Theorem 15 in \cite{FW1}, with some
key changes. For completeness, we provide an argument here. By Proposition
\ref{prop:nice m}, we only need to compute $m'(t)$ for generic $t$
such that (\ref{eq:DD}), (\ref{eq:zz}) and formulae in Corollary
\ref{cor:finite perimeter} and Lemma \ref{lem:ACont} hold. 

From local estimates (\ref{eq:add7}) and the fact that $\sigma_{2}(A)\ge\frac{3}{2}e^{4u}$,
we derive the following using Cauchy inequality:

\begin{align}
 & \fint_{L(t)}|\sigma_{1}(\widetilde{A})|\cdot|\nabla u|dl\fint_{L(t)}|\frac{H}{3}|\nabla u|+\nabla_{44}u|\cdot|\nabla u|\nonumber \\
\ge & \bigg(\fint_{L(t)}\sqrt{-\sigma_{1}(\widetilde{A})|\nabla u|(\frac{H}{3}|\nabla u|+\nabla_{44}u)|\nabla u|}\bigg)^{2}\nonumber \\
\ge & \bigg(\fint_{L(t)}\sqrt{\frac{3}{2}e^{4u}|\nabla u|^{2}}\bigg)^{2}\nonumber \\
\ge & \frac{3}{2}e^{4t}\big(\fint_{L(t)}|\nabla u|\big)^{2}.\label{eq:key 3}
\end{align}

Noting that Corollary \ref{cor:finite perimeter}, (\ref{eq:key 3})
leads to 
\begin{align}
 & (A')^{2}\fint_{L(t)}|\sigma_{1}(\widetilde{A})||\nabla u|\cdot[\frac{1}{3}\frac{d}{dt}(z^{3})]\label{eq:key110}\\
\ge & \frac{3}{2}\big(\fint_{L(t)}|\nabla u|\big)^{2}e^{4t}\cdot e^{8t}(\fint_{L(t)}\frac{1}{|\nabla u|})^{2}\nonumber \\
= & \frac{3}{2}e^{12t}(\fint_{L(t)}|\nabla u|)^{2}(\fint_{L(t)}\frac{1}{|\nabla u|})^{2}\nonumber \\
\ge & \frac{3}{2}e^{12t}|L(t)|^{4}\frac{1}{|S^{3}|^{4}}\nonumber \\
\ge & \frac{3}{2}e^{12t}B(t)^{3}4^{4}|B^{4}|\frac{1}{|S^{3}|^{4}}\nonumber \\
= & \frac{3}{2}(4C(t))^{3},\nonumber 
\end{align}
where the third inequality is due to Cauchy inequality and the fourth
inequality holds because of the iso-perimetric inequality. By the
inequality of arithmetic and geometric means, we then derive from
(\ref{eq:key110})

\begin{align}
4C & \le\frac{1}{3}(2zA'+\frac{2}{3}z'\fint_{L(t)}|\sigma_{1}(\widetilde{A})||\nabla u|).\label{eq:11}
\end{align}

Using Corollary \ref{cor:finite perimeter} and Lemma \ref{lem:inequality},
(\ref{eq:11}) then implies that

\begin{align*}
C' & \le A'+\frac{1}{3}[2zA'-\frac{2}{3}z'\fint_{L(t)}\sigma_{1}(\widetilde{A})|\nabla u|]\\
 & \le\frac{2}{3}D'+\frac{4}{9}zD'-\frac{2}{9}z'\fint_{L(t)}\sigma_{1}(\widetilde{A})|\nabla u|\\
 & =\frac{2}{3}D'+\frac{4}{9}(zD)'-\frac{4}{9}z'D-\frac{2}{9}z'\fint_{L(t)}\sigma_{1}(\widetilde{A})|\nabla u|\\
 & =\frac{2}{3}D'+\frac{4}{9}(zD)'-\frac{4}{9}z'(-\frac{1}{4}z^{3})\\
 & =\frac{2}{3}D'+\frac{4}{9}(zD)'+\frac{1}{36}(z^{4})^{'},
\end{align*}
which is equivalent to $m'(t)\geq0.$ Noting that by Lemma \ref{lem:ACont},
$z(t)>0$, we may then use Lemma \ref{lem:inequality} effectively
to get the second inequality above. 
\end{proof}
\begin{rem}
Our proof of Theorem \ref{monotone} and that of Theorem 10 in \cite{FW1}
are very similar with two key different points. First, we assume only
the $\sigma_{2}\geq\frac{3}{2}$ here while in \cite{FW1}, $\sigma_{2}$
curvature is fixed as $\frac{3}{2}$. Second, we are working in the
negative cone case here while in \cite{FW1}, the positive cone condition
is assumed. We are thus working on different type of asymptotic profiles.
The negative cone condition here plays a crucial role to deal with
the partial differential inequality condition. 
\end{rem}

\section{Proof of Main Result}

In this section, we prove our main result. Due to Theorem \ref{monotone},
it is clear that we just need to estimate limits of our quasi-local
mass $m(t)$ as $t$ approaches extreme values. When singularity exists,
due to Definition \ref{def:AHS}, when $t$ is very small, the level
set $L(t)$ is near the singular set $\left\{ p_{i},\ i=1,\cdots,k\right\} $.
Correspondingly, when $t$ is very large, the level set $L(t)$ is
close to the boundary of disc $D$. Again, without loss of generality,
we may work only on the generic $t$. 

First, we define the following: 
\begin{defn}
Let $\beta=(\beta_{1},\cdots,\beta_{k}).$ Define 
\[
\widetilde{\beta}:=\bigg(\sum_{i=1}^{k}\beta_{i}^{3}\bigg)^{1/3}
\]
and 
\[
F=F(\beta_{1},\cdots,\beta_{k}):=\frac{1}{20}[\tilde{\beta}^{2}(\tilde{\beta}+2)^{2}+(\frac{8}{3}\tilde{\beta}+4)(\sum_{i=1}^{k}\beta_{i}^{2}-\tilde{\beta}^{2})].
\]

It particular, if $k=1,$ we have 
\[
F=\frac{1}{20}\beta_{1}^{2}(\beta_{1}+2)^{2}.
\]
When $k=0$, we define $F=0.$

We now state the following 
\end{defn}

\begin{thm}
\label{prop:positive side}If $k\geq1,$ and $u(x)\in C^{2}(D\backslash\{p_{1,},\cdots,p_{k}\})$
satisfies Condition (1) in Definition \ref{def:AHS}, we have the
following 
\begin{equation}
\lim_{t\to-\infty}m(t)=\frac{1}{20}F(\beta)\geq0;\label{eq:add4}
\end{equation}
If $u(x)\in C^{2}(D),$ and $t_{0}=\inf_{D}u$, we have 
\[
\lim_{t\to t_{0}}m(t)=0.
\]
\end{thm}

The proof of Theorem \ref{prop:positive side} follows closely a similar
argument in \cite{FW1} with some subtle changes.With the asymptotic
of $u$ given near singular points, using the divergence structure
of $\sigma_{2}$ curvature, we may prove that $\sigma_{2}(A_{g})$
is locally integrable. Then, the argument in \cite{FW1} may be used
to prove Theorem \ref{prop:positive side}. Here, we present an alternative
proof which is more direct without using divergence properties of
$\sigma_{2}(A_{g})$.

From now on, we use $C$ to denote a universal constant that depends
only on $f$, $h$ and other universal constants. We write 
\[
J=K+O(|1-r_{1}|^{k})
\]
to mean that for quantities $J$ and $K$, there exists a constant
$C$ such that $|J-K|\leq C|1-r_{1}|^{k}$. We write $K=o(1)$ to
mean $\lim_{s\to0}|K|=0.$

First, near each singular point we have the following 
\begin{lem}
\label{lem:asy of u} Denote $p_{l}=(p_{l}^{1},p_{l}^{2},p_{l}^{3},p_{l}^{4})$
for $1\le l\le k$. Assume that $u(x)\in C^{2}(D\backslash\{p_{1,},\cdots,p_{k}\})$
satisfies Condition (1) in Definition \ref{def:AHS} and we use notations
given as above. We have the following derivative estimates: for $i,j\in\{1,2,3,4\},$
as $|x-p_{l}|\rightarrow0$,

\begin{equation}
u_{i}(x)=\frac{\beta_{l}}{|x-p_{l}|^{2}}(x^{i}-p_{l}^{i})+o(\frac{1}{|x-p_{l}|}),\label{eq:firstderivative-1}
\end{equation}

\begin{equation}
u_{ij}(x)=\beta_{l}\frac{\delta_{ij}}{|x-p_{l}|^{2}}-2\beta_{l}\frac{(x^{i}-p_{l}^{i})(x^{j}-p_{l}^{j})}{|x-p_{l}|^{4}}+o(\frac{1}{|x-p_{l}|^{2}}),\label{eq:secondderivative-1}
\end{equation}

\begin{align}
H(x) & =\frac{3}{|x-p_{l}|}+o(\frac{1}{|x-p_{l}|}),\label{eq:mean curvature}
\end{align}
where $H(x)$ is the mean curvature of level set $\{x,\,u(x)=t\}$
near $p_{l}$ and $t$ is sufficiently negative. 
\end{lem}

The proof of Lemma follows a similar argument in Lemma 7 of \cite{FW1}.
Note that, local asymptotic properties given in Definition 1 is sufficient
to carry through computations and establish identies above. We omit
details of the proof here. 

To present our next lemma, we further fix some notations. Assume that
$k\geq1.$ Let $\Omega_{l}$ be connected small domain in $\mathbb{R}^{4}$
such that: $p_{l}\in\Omega_{l}$ and $\Omega_{i}\cap\Omega_{j}=\emptyset$
for any $i\neq j$. Define, for $t$ sufficiently negative, $L_{l}(t)=L(t)\cap\Omega_{l}$
which is closed. We localize the geometry near each singular point.

Fix $l\in\{1,\cdots,k\}$. We use a local polar coordinate system
near $p_{l}.$ That means, any $x\in\Omega_{l}$ can be written as
$x=r_{l}\theta,$ where $r_{l}=|x-p_{l}|$ for $l=1,\cdots,k$ and
$\theta\in S^{3}.$ Let $\pi_{l}:\mathbb{R}^{4}\backslash\{p_{l}\}\to S^{3}:x\to\pi_{l}(x)=\frac{x-p_{l}}{r_{l}}$.
Then we have the Euclidean volume form written as $dx=r_{l}^{3}dr_{l}\wedge\pi_{l}^{*}(d\theta)$.
For $t$ sufficiently negative, let $i_{l,t}$ be the inclusion map
$i_{l,t}:L_{l}(t)\to\Omega_{l}\subset\mathbb{R}^{4}.$ Let $dl_{l,t}$
be the volume form of $L_{l}(t)$ and $n_{l,t}$ be the outward normal
vector of $L_{l}(t)\subset\mathbb{R}^{4}$. We then have 
\begin{equation}
dl_{l}=i_{l,t}^{*}(\iota(n_{l,t})dx),\label{eq:add20}
\end{equation}
where $\iota$ is the contraction map.

We may now present the following 
\begin{lem}
\label{lem:conic volume } Assume that $u(x)\in C^{2}(D\backslash\{p_{1,},\cdots,p_{k}\})$
satisfies Condition (1) in Definition \ref{def:AHS} and we use notations
given as above. Let $n'_{l,t}=\frac{x-p_{l}}{|x-p_{l}|}.$ We have

\[
dl_{l}=i_{l,t}^{*}(\iota(n_{l,t}')(dx))(1+o(1))=i_{l,t}^{*}\pi_{l}^{*}(r_{l}^{3}d\theta)(1+o(1)).
\]
\end{lem}

\begin{proof}
A direct consequence of (\ref{eq:firstderivative-1}) is that $|n_{l,t}-n'_{l,t}|=o(1)$
as $t\to-\infty.$ Noting also that $dx=r_{l}^{3}dr_{l}\wedge\pi_{l}^{*}(d\theta)$,
we get our volume form estimate by Condition (1) in Definition (\ref{def:AHS})
and standard polar coordinate computation . 
\end{proof}
We are now ready to prove Theorem \ref{prop:positive side}. 
\begin{proof}
By Lemma \ref{lem:asy of u} and Lemma \ref{lem:conic volume },

\begin{align}
\lim_{t\to-\infty}\fintop_{L_{l}(t)}|\nabla u|^{3}dl_{l} & =\lim_{t\to-\infty}\fintop_{L_{l}(t)}\frac{1}{r_{l}^{3}}\cdot(|\nabla u|r_{l})^{3}dl_{l}\nonumber \\
 & =\beta_{l}^{3}\lim_{t\to-\infty}\fintop_{L_{l}(t)}\frac{1}{r_{l}^{3}}i_{l,t}^{*}\pi_{l}^{*}(r_{l}^{3}d\theta)\nonumber \\
 & =\beta_{l}^{3}\fintop_{S^{3}}d\theta=\beta_{l}^{3}.\label{eq:add9}
\end{align}
Similarly, using (\ref{eq:mean curvature}), we get 
\begin{equation}
\lim_{t\to-\infty}\fintop_{L_{l}(t)}H|\nabla u|^{2}dl_{l}=3\beta_{l}^{2}.\label{eq:add10}
\end{equation}
Theorem \ref{prop:positive side} then follows directly from Definition
\ref{def:quasi-local mass}, (\ref{eq:add9}) and (\ref{eq:add10})
when $k\geq1.$ For $k=0,$ the proof is similar and simpler since
$u$ is smooth. We have thus finished the proof. 
\end{proof}
Second, we discuss the limit of quasi-mass as $t\to+\infty$. It is
clear that this corresponds to the limit when $r=|x|\to1^{-}.$ Our
result is summarized in the following 
\begin{thm}
\label{thm:negative side}Let $(M,g)$ be a conformally flat asymptotically
hyperbolic 4-manifold with possible singularities. With notations
given as in Section 1 and Section 2, we have 
\[
\lim_{t\to\infty}m(t)=-m(M,g)=-\frac{1}{|S^{3}|}\int_{S^{3}}f(\theta)d\theta.
\]
\end{thm}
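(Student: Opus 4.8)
The limit $\lim_{t\to\infty}m(t)$ depends only on the asymptotic expansion of $u$ near $\partial D$, so the plan is to expand the three building blocks $z(t)$, $D(t)$, $C(t)$ of Definition \ref{def:quasi-local mass} and show that, although each diverges, the particular combination defining $m(t)$ converges to $-m_{2}(M,g)$. I work in the coordinates $(s,\theta)$ with $s=\log(1/r)$, so that $t\to+\infty$ is equivalent to $s\to0^{+}$, and I represent each level set $L(t)$ as a radial graph $s=\rho(t,\theta)$ over $S^{3}$ by solving $u(\rho,\theta)=t$. Letting $\bar{\rho}(t)$ be the round reference defined by $\bar{\rho}-\log\sinh\bar{\rho}=t$, the expansion $u=s-\log\sinh s+s^{4}f+h$ together with $u_{s}\sim-1/s$ gives $\rho(t,\theta)=\bar{\rho}(t)+\bar{\rho}^{5}f(\theta)+o(\bar{\rho}^{5})$; thus $L(t)$ is a nearly round sphere whose anisotropy is of size $O(s^{5})$.

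Next I would expand each quantity as a function of $t$ (equivalently of $\bar{\rho}\to0$), always reducing surface averages over $L(t)$ to averages over $S^{3}$ via the graph area form $e^{-3\rho}(1+o(1))\,d\theta$. A direct computation of the enclosed Euclidean volume yields the clean identity $C(t)=\tfrac14\,\frac{1}{|S^{3}|}\int_{S^{3}}e^{4t-4\rho(t,\theta)}\,d\theta$, which involves only $u$ and not its derivatives. For $z(t)=(\fint_{L(t)}|\nabla u|^{3})^{1/3}$ and $D(t)$ I would use $|\nabla u|^{2}=e^{2s}(u_{s}^{2}+|\nabla^{S^{3}}u|^{2})$ with $u_{s}=1-\coth s+4s^{3}f+h_{s}$ and the bounded mean curvature $H\to3$, carried to high enough order that $z^{4}$, $Dz$ and $D$ are known through their $O(1)$ terms. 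At leading order $z\sim1/s$, $D\sim\tfrac12 z^{3}$ and $C\sim\tfrac14 s^{-4}$, so $z$, $D$, $C$ diverge like $s^{-1}$, $s^{-3}$, $s^{-4}$.

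Substituting into $m(t)=\tfrac15[\tfrac23 D+\tfrac49 Dz+\tfrac1{36}z^{4}-C]$, the leading $s^{-4}$ coefficients already cancel, since $\tfrac49\cdot\tfrac12+\tfrac1{36}-\tfrac14=0$; I would then verify the cancellation of the subleading divergences of orders $s^{-3}$, $s^{-2}$, $s^{-1}$ and collect the surviving $O(1)$ term. Tracking the contribution of the $s^{4}f$ term in $u$, together with that of the $O(s^{5})$ non-round correction to $\rho$ (which is multiplied by divergent factors), produces $\lim_{t\to\infty}m(t)=-\frac{1}{|S^{3}|}\int_{S^{3}}f\,d\theta=-m_{2}(M,g)$. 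As a consistency check, setting $f$ constant and $h\equiv0$ makes the reductions exact: the area factor $e^{-3\rho}$ collapses the surface averages so that $z=-v_{s}-1$, $\tfrac23 D+\tfrac49 Dz+\tfrac1{36}z^{4}=\tfrac14(v_{s}^{2}-1)^{2}$ and $C=\tfrac14 e^{4v}$, recovering the Section 2 formula $m=\tfrac1{20}[(v_{s}^{2}-1)^{2}-e^{4v}]$, whose limit is indeed $-f$.

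I expect the main difficulty to be twofold. First, the cancellation is a genuine four-term, several-order computation: the expansions must be pushed well past the leading singularity with all remainders controlled uniformly in $\theta$, and one must be careful that the finite limit is parametrization independent, since the apparent ``finite part'' of an individual term (for instance $C$) shifts depending on whether one expands in $\rho$ or in $\bar{\rho}$. Second, and more delicate, $z$ and $D$ involve $u_{s}$ and hence the radial derivative $h_{s}$ of the remainder, whereas Definition \ref{def:AHS} only controls $h$ and its tangential derivatives as $o(s^{4})$. One must therefore show that the radial derivatives of $h$ do not contribute in the limit; the natural tools are the co-area formula together with the divergence identity $D(t)=D(t_{0})+\fint_{S(t)\backslash S(t_{0})}\sigma_{2}(g^{-1}A_{g})e^{4u}$ of Lemma \ref{lem: derivative}, which re-expresses $D$ through volume integrals seeing only $u$, and, if needed, interior elliptic estimates for the inequality $\sigma_{2}(A_{g})\ge\tfrac32$ to upgrade the $C^{0}$ control of $h$ to control of its derivatives.
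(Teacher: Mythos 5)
Your proposal follows essentially the same route as the paper's proof: represent $L(t)$ as a radial graph over $S^{3}$ with a round reference profile, expand $z$, $D$ and $C$ to the order at which the divergent parts cancel via the exact identity satisfied by the hyperbolic factor $w$, and read off the finite part $-\fint_{S^{3}}f\,d\theta$. The differences are cosmetic (you work in $s$ rather than $r$, and your reference profile omits the mean of $f$, which the paper's comparison function $u_{1}$ includes to make the deviation $\varepsilon$ have zero spherical average), and your consistency check against the Section 2 radial formula as well as your flagged difficulties (parametrization independence of the finite part, control of radial derivatives of $h$) accurately reflect the points the paper's Lemmas 14--21 are designed to handle.
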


It is clear that our main results, Theorem \ref{thm:main} and Corollary
\ref{cor:main cor} are then consequences of Theorems \ref{monotone},
\ref{prop:positive side} and \ref{thm:negative side}. In other words,
the quasi-local mass connects the information of the mass of the manifold,
$m(M)$, and local geometric information of singular points. In particular,
when no singularity exists, this gives the non-positive estimate of
$m(M)$. For the sharp case, we may examine the proof of Theorem \ref{monotone},
where all inequalities become equalities. In particular, the iso-perimetric
inequality has to be sharp. This leads to obvious geometric and analytical
consequences that all functions involved have to be rotationally symmetric
and $\sigma_{2}(g^{-1}A_{g})=\frac{3}{2}.$ Thus, we have obtained
the Chang-Han-Yang model case. In the sharp case when no singularity
exists, we have obtained the standard hyperbolic space $\mathbb{H}^{4}$.

The rest of the section is now devoted to the proof of Theorem \ref{thm:negative side}.
By definition \ref{def:AHS}, it is clear that as $t\to+\infty,$
level set $L(t)$ is convergent to $\partial D$ in the Gromov-Hausdorff
sense. We will analyze limits of geometric quantities during this
procedure in detail.

For $x=(x^{1},\cdots,x^{4})\in D$, we also use the corresponding
polar coordinate $x=r\theta$ where $r=|x|$ and $\theta\in S^{3}.$
Fix a $t\in\mathbb{R}$ such that $L(t)$ is smooth and pick $x\in L(t)$.
We define 
\[
w(x)=w(r)=\log\frac{2}{1-r^{2}}=s-\log\sinh s.
\]
By Definition \ref{def:AHS}, 
\begin{equation}
u(x)=u(r,\theta)=w(r)+f(\theta)s^{4}+h(x),\label{eq:new1}
\end{equation}
where $h(x)=o(s^{4}).$ We denote $F(r,t,\theta)=:t-(w(r)+f(\theta)s^{4}+h(x))$.
Then $F(r,t,\theta)=0$ on $L(t).$ By Condition (2) in Definition
1, we see that
\begin{align*}
F_{r} & =-w_{r}-4f(\theta)s^{3}s_{r}-h_{r}\\
 & =-\frac{2r}{1-r^{2}}-\frac{4f(\theta)(\ln r)^{3}}{r}-h_{r}\neq0
\end{align*}
near $r=1$. By the implicit function theorem, we may present $r$
as a local $C^{2}$ function of $t$ and $\theta$ near $r=1.$ We
may then write $r=r(t,\theta)$. We also define a rotationally symmetric
comparison function 
\begin{equation}
u_{1}(x)=u_{1}(r)=w(r)+(\ln r)^{4}\fintop_{S^{3}}f(\theta)d\theta.\label{eq:new2}
\end{equation}
It is clear that $u_{1}$ satisfies similar asymptotic behavior as
that of $u$. Thus, we may, at least when $s$ is small enough, define
$r_{1}=r_{1}(t)$ to be the unique value such that $u_{1}(r_{1}(t))=t$.
We also define 
\[
s'=|\ln r_{1}|.
\]
It is clear that $s'$ is dependent only on $t$ and independent of
choice of $\theta$. The following limits are clear 
\[
\lim_{t\to\infty}r_{1}(t)=1,\,\lim_{t\to\infty}r(t,\theta)=1.
\]
Furthermore, $s$ and $s'$ are bounded by $|1-r|$ and $|1-r_{1}|$,
respectively.

We first establish the following basic estimates: 
\begin{lem}
\label{lem:basic estimate} Assume that $u(x)\in C^{2}(D\backslash\{p_{1,},\cdots,p_{k}\})$
satisfies Condition (2) in Definition 1 and we use notations given
as above. For $t>>1$, we have, 
\begin{align}
1-r_{1} & =O(1-r)=O(s'),\nonumber \\
1-r & =O(s'),\ \ln(r)=O(s'),\nonumber \\
r-r_{1} & =O(s'^{5}).\label{eq:s^5}
\end{align}
\end{lem}

\begin{proof}
With a fixed $t$ that is large enough, we have 
\[
u_{1}(r_{1})=t=u(r,\theta),
\]
which, according to (\ref{eq:new1}) and (\ref{eq:new2}), implies
that 
\begin{equation}
-\ln(1-r_{1}^{2})+\ln2+\fint_{S^{3}}f(\theta)d\theta(\ln r_{1})^{4}=-\ln(1-r^{2})+\ln2+f(\theta)(\ln r)^{4}+h(x).\label{eq:key identity}
\end{equation}

We first claim that $|r-r_{1}|\le C|1-r_{1}|$ for some positive constant
$C>0$. In fact, \\

\begin{align*}
-\ln(1-r_{1}^{2})+\ln(1-r^{2}) & =f(\theta)(\ln r)^{4}-\fint_{S^{3}}f(\theta)d\theta(\ln r_{1})^{4}+h(x)=o(1).\\
\end{align*}
Noting that 
\[
\ln(1-r^{2})-\ln(1-r_{1}^{2})=\ln(1+\frac{r_{1}^{2}-r^{2}}{1-r_{1}^{2}}),
\]
we get $\frac{r_{1}^{2}-r^{2}}{1-r_{1}^{2}}=o(1)$, which implies
that 
\[
|r_{1}-r|=o(1)(1-r_{1}^{2})\le o(1)|1-r_{1}|.
\]
Similarly, we obtain $|r_{1}-r|\le o(1)|1-r|$. Thus, $1-r=1-r_{1}+r_{1}-r\le C(1-r_{1}).$
Also $1-r_{1}\le C(1-r)$.

Observing that $|\ln r_{1}|=s'$ and $s=|\ln r|=O(1-r)=O(1-r_{1})=O(s')$,
we get 
\begin{align}
 & f(\theta)(\ln r)^{4}-\fint_{S^{3}}f(\theta)d\theta(\ln r_{1})^{4}+h(x)\nonumber \\
 & =\ln(1-r^{2})-\ln(1-r_{1}^{2})\\
 & =\int_{r_{1}}^{r}\frac{d\ln(1-\kappa^{2})}{d\kappa}d\kappa=\frac{-2\kappa_{0}}{1-\kappa_{0}^{2}}(r-r_{1})\label{eq:new3}
\end{align}
with $\kappa_{0}$ between $r$ and $r_{1}$, which means that $1-\kappa_{0}^{2}=O(s)=O(s'$).
We get from (\ref{eq:new3}) that 
\begin{equation}
|r-r_{1}|=O(s'^{5}).\label{eq:b5}
\end{equation}
\end{proof}
It is clear that due to (\ref{eq:b5}), we have $O(s^{k})=O(s'^{k}).$
In the following, we do not distinguish them.

For future use, we define 
\[
\varepsilon(t,\theta):=r(t,\theta)-r_{1}(t),
\]
and obtain a sharper estimate of $\varepsilon$. To simplify the notation,
we define 
\[
\bar{f}(\theta)=f(\theta)-\fintop_{S^{3}}f,
\]
which leads to 
\[
\fintop_{S^{3}}\bar{f}=0.
\]
We now present the following consequence of Lemma \ref{lem:basic estimate}. 
\begin{cor}
\label{cor:r-r1}Assume that $u(x)\in C^{2}(D\backslash\{p_{1,},\cdots,p_{k}\})$
satisfies Condition (2) in Definition 1 and we use notations given
as above. We have 
\begin{equation}
\varepsilon(t,\theta)=-\frac{(1-r_{1}^{2})}{2r_{1}}\bar{f}(\theta)s^{4}+o(s^{5}).\label{eq:=00003D00003D00005Cedefinition}
\end{equation}
\end{cor}

\begin{proof}
By Lemma \ref{lem:basic estimate},

\begin{align*}
 & f(\theta)(\ln r)^{4}-\fint_{S^{3}}f(\theta)d\theta(\ln r_{1})^{4}\\
= & (\ln r)^{4}\big(f(\theta)-\fint_{S^{3}}f(\theta)d\theta\big)+\fint_{S^{3}}f(\theta)d\theta\ \big((\ln r)^{4}-(\ln r_{1})^{4}\big)\\
= & \bar{f}(\theta)(\ln r)^{4}+\fint_{S^{3}}f(\theta)d\theta(\ln r-\ln r_{1})\big((\ln r_{1})^{3}+(\ln r_{1})^{2}\ln r+\ln r_{1}(\ln r)^{2}+(\ln r)^{3})\\
= & \bar{f}(\theta)(\ln r)^{4}+\fint_{S^{3}}f(\theta)d\theta(\frac{r-r_{1}}{r}+O(s^{10}))O(s^{3})\\
= & \bar{f}(\theta)(\ln r)^{4}+O(s^{8}).
\end{align*}

Also $\ln(1-r^{2})-\ln(1-r_{1}^{2})=\ln(1+\frac{r_{1}^{2}-r^{2}}{1-r_{1}^{2}})=\frac{r_{1}^{2}-r^{2}}{1-r_{1}^{2}}+O(s^{8})$.
Noticing that $h=o(s^{4}),$ by (\ref{eq:key identity}) and (\ref{eq:s^5}),
we get the conclusion. 
\end{proof}
We now proceed to obtain derivative estimates. 
\begin{lem}
\label{lem: r tangential derivative}Assume that $u(x)\in C^{2}(D\backslash\{p_{1,},\cdots,p_{k}\})$
satisfies Condition (2) in Definition 1 and we use notations given
as above. For $r=r(t,\theta),$ we have $r_{\theta}=O(s^{5})$ and
$r_{\theta\theta}=O(s^{5})$. 
\end{lem}

\begin{proof}
Taking derivatives to both sides of (\ref{eq:key identity}), we have

\begin{equation}
0=\frac{2r}{1-r^{2}}r_{\theta}+\nabla_{\theta}f\cdot s{}^{4}-\frac{f}{r}s^{3}r_{\theta}+h_{r}r_{\theta}+h_{\theta}.\label{eq:r first tangent derivative}
\end{equation}
Thus, noting the asymptotic behavior of $h$ and $h_{\theta}$ by
Definition \ref{def:AHS}, we get $r_{\theta}=O(s^{5})$. Taking further
derivative to (\ref{eq:r first tangent derivative}), we obtain 
\[
0=(\frac{2r}{1-r^{2}})_{r}r_{\theta}^{2}+\frac{2r}{1-r^{2}}r_{\theta\theta}+\nabla_{\theta\theta}f\cdot s^{4}-\frac{\nabla_{\theta}f}{r}s^{3}r_{\theta}--\frac{f}{r}s^{3}r_{\theta\theta}+h_{r}r_{\theta\theta}+h_{rr}r_{\theta}^{2}+h_{\theta\theta}.
\]
Thus
\[
r_{\theta\theta}=O(s)[O(s^{4})+h_{rr}r_{\theta}^{2}+h_{\theta\theta}]=O(s^{5})
\]
 by Condition (2) in Definition \ref{def:AHS}.
\end{proof}
\begin{lem}
\label{lem: first derivative} Assume that $u(x)\in C^{2}(D\backslash\{p_{1,},\cdots,p_{k}\})$
satisfies Condition (2) in Definition 1 and we use notations given
as above. By the asymptotic behavior of $u,$ we have 
\[
\nabla_{r}u=\nabla_{r}w+O(s^{3})=\frac{1}{s}[1+O(s)],
\]
\[
\nabla_{\theta}u=s^{4}\nabla_{\theta}f+o(s{}^{4})=O(s^{4}),
\]
\[
\nabla_{\theta_{i}\theta_{j}}u=O(s{}^{4}),
\]
\begin{equation}
|\nabla u|=|u_{r}|\big(1+O(s^{5})\big).\label{eq:add11}
\end{equation}
\end{lem}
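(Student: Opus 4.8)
The plan is to compute each derivative directly from the decomposition $u(r,\theta)=w(r)+f(\theta)s^{4}+h(x)$ of \eqref{eq:new1}, keeping careful track of orders in $s$, and to use the tangential estimates of Lemma \ref{lem: r tangential derivative} together with the asymptotics of $h$ posited in Definition \ref{def:AHS}. First I would handle the radial derivative. Since $w(r)=s-\log\sinh s$ and $s=\log\frac1r$, a direct computation gives $\nabla_{r}w=\frac{dw}{ds}\cdot\frac{ds}{dr}$; using $\frac{dw}{ds}=1-\coth s=-\frac{1}{s}(1+O(s))$ as $s\to 0$ and $\frac{ds}{dr}=-\frac1r$, one obtains $\nabla_{r}w=\frac{1}{s}(1+O(s))$. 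The remaining terms $\partial_{r}(f(\theta)s^{4})$ and $\partial_{r}h$ contribute at order $O(s^{3})$ (since $\partial_{r}s=-\frac1r$ and $\partial_r(s^4)=O(s^3)$), which is lower order, giving $\nabla_{r}u=\nabla_{r}w+O(s^{3})=\frac1s(1+O(s))$.

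Next I would treat the tangential first and second derivatives. Here $w(r)$ is rotationally symmetric, so only the terms $f(\theta)s^{4}$ and $h$ survive differentiation in $\theta$. This gives $\nabla_{\theta}u=s^{4}\nabla_{\theta}f+\nabla_{\theta}h$, and since $\nabla_{\theta}h\in o(s^{4})$ by Definition \ref{def:AHS}, we get $\nabla_{\theta}u=s^{4}\nabla_{\theta}f+o(s^{4})=O(s^{4})$. Similarly $\nabla_{\theta_i\theta_j}u=s^{4}\nabla_{\theta_i\theta_j}f+\nabla_{\theta_i\theta_j}h=O(s^{4})$, using $\nabla_{\theta\theta}h\in o(s^{4})$. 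One subtlety to flag: these derivatives are taken at a point of the level set, so strictly one differentiates along $S^{3}$ at fixed $r$; since the stated quantities are the intrinsic $\mathbb{R}^4$ derivatives of $u$ expressed in polar coordinates, no use of $r=r(t,\theta)$ is needed for these three estimates, which keeps them clean.

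Finally, for the gradient-norm estimate \eqref{eq:add11} I would write $|\nabla u|^{2}=(\nabla_{r}u)^{2}+\frac{1}{r^{2}}|\nabla_{\theta}u|^{2}$ in polar coordinates. From the estimates just established, $(\nabla_{r}u)^{2}=\frac{1}{s^{2}}(1+O(s))$ while $\frac{1}{r^{2}}|\nabla_{\theta}u|^{2}=O(s^{8})$, since $r\to1$. Hence
\[
|\nabla u|^{2}=(u_{r})^{2}\Bigl(1+\frac{|\nabla_{\theta}u|^{2}}{r^{2}(u_{r})^{2}}\Bigr)=(u_{r})^{2}\bigl(1+O(s^{10})\bigr),
\]
and taking square roots yields $|\nabla u|=|u_{r}|(1+O(s^{10}))=|u_{r}|(1+O(s^{5}))$, which gives \eqref{eq:add11}. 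The only genuinely delicate point is the radial computation: one must expand $\coth s=\frac1s+\frac{s}{3}+O(s^{3})$ carefully so that the error in $\nabla_r w$ is honestly $O(s)$ relative to the leading $\frac1s$, and confirm that the $s^{4}f$ and $h$ terms really are subleading after differentiation. Everything else is bookkeeping of orders, controlled by the $o(s^{4})$ hypotheses on $h,\nabla_\theta h,\nabla_{\theta\theta}h$ from Definition \ref{def:AHS}.
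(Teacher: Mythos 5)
Your proposal is correct and is exactly the direct order-by-order computation that the paper has in mind when it omits the proof as ``straightforward'': expand $u=w(r)+f(\theta)s^{4}+h$, differentiate term by term, and observe that the angular gradient is $O(s^{4})$ against a radial derivative of size $s^{-1}$, so that $|\nabla u|=|u_r|(1+O(s^{10}))\subset|u_r|(1+O(s^{5}))$. The one point worth flagging is that your claim $\partial_r h=O(s^{3})$ does not follow literally from Definition \ref{def:AHS}, which only posits $h,\nabla_\theta h,\nabla_{\theta\theta}h\in o(s^{4})$ and says nothing about the radial derivative of $h$; however, the paper itself uses $h_r$ in the same implicit way in the proof of Lemma \ref{lem: r tangential derivative}, so this is a shared (and clearly intended) strengthening of the hypothesis rather than a defect of your argument.
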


The proof of Lemma \ref{lem: first derivative} is straightforward
so we omit it. Lemma \ref{lem: first derivative} implies that we
may approximate $|\nabla u|$ by $|\partial_{r}u|$.

To discuss geometry near $L(t)$, we define following maps: first
let $\pi$ be the projection map $\mathbb{R}^{4}\backslash\{O\}\to S^{3}:x\to\pi(x)=\frac{x}{|x|}$
and $i_{t}:L(t)\to\mathbb{R}^{4}\backslash\{O\}$ be the inclusion
map. Then for $d\theta$ being the volume form on $S^{3}$, $(\pi\circ i_{t})^{*}(d\theta)$
is then a volume form on $L(t)$ for $t$ large. To simplify the notation,
we simply write $(\pi\circ i_{t})^{*}(d\theta)$ as $d\theta$ when
no confusion arises. We are now ready to give the following estimate
of geometric terms. 
\begin{lem}
\label{lem: mean curavture and volume}Assume that $u(x)\in C^{2}(D\backslash\{p_{1,},\cdots,p_{k}\})$
satisfies Condition (2) in Definition 1 and we use notations given
as above. For the mean curvature $H$ of $L(t),$ we have, as $t\to\infty,$
\[
|H-\frac{3}{r_{1}}|=O(s^{5}).
\]
On the level set $L(t),$ we have the volume form $dl$ 
\[
dl=r_{1}^{3}(t)(\pi\circ i_{t})^{*}(d\theta)[1+O(s^{5})]=r^{3}(t,\theta)(\pi\circ i_{t})^{*}(d\theta)[1+O(s^{5})].
\]
\end{lem}

\begin{proof}
Using the polar coordinates, we denote $x\in\mathbb{R}^{4}$ also
as $x=r\theta$, $\theta\in S^{3}.$ Then for any point $x\in L(t)\subset B_{1}$,
the outer normal vector of $L(t)$, denoted as $\boldsymbol{n}$,
can be computed using Lemma \ref{lem: first derivative} 
\begin{equation}
n=\frac{\nabla u}{|\nabla u|}=\frac{x}{r}+O(s^{5})=\frac{x}{r_{1}}+O(s^{5}).\label{eq:add6}
\end{equation}

According to Lemma \ref{lem: first derivative}, we may find local
coordinate $\{\eta^{\alpha}\},$ $\alpha=1,2,3$ near $\theta\in S^{3}$.
Then $\eta^{\alpha}$ can be extended to an open set $W\subset B_{1}\backslash\{O\}$
as $\pi^{*}\eta^{\alpha}$, which we write as $\eta^{\alpha}$ for
simplicity. We have then $|\frac{\partial}{\partial\eta^{\alpha}}(y)|=|y|\cdot|\frac{\partial}{\partial\eta^{\alpha}}(\pi(y))|$
for $y\in W$. Consider the natural orthogonal projection map $p^{\bot}:T\mathbb{R}^{4}\to TL(t)$.
Now $\{p^{\bot}(\frac{\partial}{\partial\eta^{\alpha}})\}$ is a local
basis of $TL(t)$. Furthermore, by Lemma \ref{lem: first derivative},
\[
|\frac{\partial}{\partial\eta^{\alpha}}-p^{\bot}(\frac{\partial}{\partial\eta^{\alpha}})|=O(s^{5}).
\]
By Lemma \ref{lem: r tangential derivative} and \ref{lem: first derivative},
we estimate the first and second fundamental forms of $L(t)$ as following
\begin{align*}
h_{\alpha\beta} & =<p^{\bot}(\frac{\partial}{\partial\eta^{\alpha}})p^{\bot}(\frac{\partial}{\partial\eta^{\beta}})x,\boldsymbol{n}>\\
 & =<[\frac{\partial^{2}}{\partial\eta^{\alpha}\partial\eta^{\beta}}r(t,\theta)]\theta+\frac{\partial r}{\partial\eta^{\alpha}}\frac{\partial\theta}{\partial\eta^{\beta}}+\frac{\partial r}{\partial\eta^{\beta}}\frac{\partial\theta}{\partial\eta^{\alpha}}+r\frac{\partial^{2}\theta}{\partial\eta^{\alpha}\partial\eta^{\beta}},\boldsymbol{n}>+O(s^{5})\\
 & =r<\frac{\partial^{2}\theta}{\partial\eta^{\alpha}\partial\eta^{\beta}},\frac{x}{r}>+O(s^{5}),
\end{align*}
and similarly, 
\begin{align}
g_{\alpha\beta} & =<p^{\bot}(\frac{\partial}{\partial\eta^{\alpha}})x,p^{\bot}(\frac{\partial}{\partial\eta^{\beta}})x>=r^{2}<\frac{\partial\theta}{\partial\eta^{\alpha}},\frac{\partial\theta}{\partial\eta^{\beta}}>+O(s^{5}).\label{eq:first fundamental}\\
\nonumber 
\end{align}
Therefore we estimate the mean curvature of $L(t)$, 
\[
H=g^{\alpha\beta}h_{\alpha\beta}=\frac{1}{r}H_{S^{3}}+O(s^{5})=\frac{3}{r_{1}(t)}+O(s^{5}).
\]
Furthermore, note that $dx=r^{3}dr\pi^{*}(d\theta)$, and $dl=i_{t}^{*}(\iota(n)dx).$
Using (\ref{eq:add6}), we have $|n-\frac{x}{r}|=O(s^{5}).$ We may
then estimate the volume form of $L(t)$ as follows 
\begin{equation}
dl=i_{t}^{*}(\iota(n)dx)=i_{t}^{*}r^{3}\pi^{*}(d\theta)[1+O(s^{5})]=r_{1}^{3}(t)(\pi\circ i_{t})^{*}d\theta[1+O(s^{5})].\label{eq:b6}
\end{equation}
As a consequence, we also have 
\begin{equation}
|L(t)|=r_{1}^{3}(t)|S^{3}|[1+O(s^{5})].\label{eq:volume approx}
\end{equation}
\end{proof}
With all local point-wise estimates in place, we are ready to compute
integrals that have appeared in our quasi-local mass. First, we have 
\begin{lem}
\label{lem:rw'^3}Assume that $u(x)\in C^{2}(D\backslash\{p_{1,},\cdots,p_{k}\})$
satisfies Condition (2) in Definition 1 and we use notations given
as above. For any fixed $t$ very large, let $x=(r(t,\theta),\theta)\in L(t),$
and $r_{1}=r_{1}(t)$, then 
\[
\fint_{L(t)}r^{3}(t,\theta)(w'(r))^{3}(\pi\circ i_{t})^{*}(d\theta)=\big(r_{1}w^{\prime}(r_{1})\big)^{3}+o(s).
\]
\end{lem}

\begin{proof}
By definition of $w$, 
\begin{align*}
 & w'\mid_{r=r(t,\theta)}\\
= & \frac{1}{1-r}-\frac{1}{1+r}\\
= & \frac{1}{1-r_{1}-\varepsilon}-\frac{1}{1+r_{1}+\varepsilon}\\
= & \frac{\varepsilon}{\left(1-r_{1}\right)\left(1-r_{1}-\varepsilon\right)}+\frac{\varepsilon}{(1+r_{1})(1+r_{1}+\varepsilon)}+w^{\prime}(r_{1}).
\end{align*}

We get 
\begin{align}
 & rw'\mid_{r=r(t,\theta)}\label{eq:first derivative difference}\\
= & \left(r_{1}+\varepsilon\right)\bigg\{\frac{\varepsilon}{\left(1-r_{1}\right)\left(1-r_{1}-\varepsilon\right)}+\frac{\varepsilon}{(1+r_{1})(1+r_{1}+\varepsilon)}+w^{\prime}(r_{1})\bigg\}\\
= & r_{1}w^{\prime}\left(r_{1}\right)+r_{1}\left[\frac{\varepsilon}{\left(1-r_{1}\right)\left(1-r_{1}-\varepsilon\right)}+\frac{\varepsilon}{(1+r_{1})\left(1+r_{1}+\varepsilon\right)}\right]\nonumber \\
 & +\varepsilon\left[\frac{\varepsilon}{\left(1-r_{1}\right)\left(1-r_{1}-\varepsilon\right)}+\frac{\varepsilon}{(1+r_{1})\left(1+r_{1}+\varepsilon\right)}+w^{\prime}\left(r_{1}\right)\right]\nonumber \\
=: & r_{1}w^{\prime}\left(r_{1}\right)+F_{1}+F_{2},\nonumber 
\end{align}
where 
\[
F_{1}=r_{1}\left[\frac{\varepsilon}{\left(1-r_{1}\right)\left(1-r_{1}-\varepsilon\right)}+\frac{\varepsilon}{(1+r_{1})\left(1+r_{1}+\varepsilon\right)}\right]
\]
 and 
\[
F_{2}=\varepsilon\left[\frac{\varepsilon}{\left(1-r_{1}\right)\left(1-r_{1}-\varepsilon\right)}+\frac{\varepsilon}{(1+r_{1})\left(1+r_{1}+\varepsilon\right)}+w^{\prime}\left(r_{1}\right)\right].
\]
It is clear that $r_{1}w^{\prime}\left(r_{1}\right)=O(\frac{1}{s})$,
$F_{1}=O(s^{3})$ and $F_{2}=O(s^{4})$ by Lemma \ref{lem:basic estimate}
and Corollary \ref{cor:r-r1}. Thus, 
\begin{align}
\fint_{L(t)}r^{3}(w')^{3}(\pi\circ i_{t})^{*}d\theta & =\fint_{L(t)}[r_{1}w^{\prime}(r_{1})+F_{1}+F_{2}]^{3}(\pi\circ i_{t})^{*}d\theta\label{eq:the most difficult tem}\\
 & =\fint_{L(t)}\{\big(r_{1}w^{\prime}(r_{1})\big)^{3}+3F_{1}[r_{1}w^{\prime}(r_{1})]^{2}\}(\pi\circ i_{t})^{*}d\theta+O(s^{2})\nonumber \\
 & =\big(r_{1}w^{\prime}(r_{1})\big)^{3}+3[r_{1}w^{\prime}(r_{1})]^{2}\fint_{L(t)}F_{1}(\pi\circ i_{t})^{*}d\theta+O(s^{2}).\nonumber 
\end{align}

We compute the second term. Noting that by Lemma \ref{lem:basic estimate}
and Corollary \ref{cor:r-r1}, 

\begin{align}
\fint_{L(t)}F_{1}(\pi\circ i_{t})^{*}d\theta & =\fint_{L(t)}r_{1}\left[\frac{\varepsilon}{\left(1-r_{1}\right)\left(1-r_{1}-\varepsilon\right)}+\frac{\varepsilon}{(1+r_{1})\left(1+r_{1}+\varepsilon\right)}\right](\pi\circ i_{t})^{*}d\theta\label{eq:add21}\\
 & =\frac{r_{1}}{1-r_{1}}\fint_{L(t)}\frac{\varepsilon\ (\pi\circ i_{t})^{*}d\theta}{1-r_{1}-\varepsilon}+\frac{r_{1}}{1+r_{1}}\fint_{L(t)}\frac{\varepsilon\ (\pi\circ i_{t})^{*}d\theta}{(1+r_{1}+\varepsilon)}\nonumber \\
 & =\frac{r_{1}}{1-r_{1}}\fint_{L(t)}(\frac{\varepsilon}{1-r_{1}}+\frac{\varepsilon^{2}}{(1-r_{1}-\varepsilon)(1-r_{1})}\big)(\pi\circ i_{t})^{*}d\theta+O(s^{5}).\nonumber 
\end{align}

Using Corollary \ref{cor:r-r1} and (\ref{eq:b6}), and noting that
$\int_{S^{3}}\bar{f}d\theta=0,$ we get 
\begin{equation}
\fint_{L(t)}\varepsilon\ (\pi\circ i_{t})^{*}d\theta=\fint_{L(t)}[-\frac{(1-r_{1}^{2})}{2r_{1}}\bar{f}s'^{4}+o(s^{5})](\pi\circ i_{t})^{*}(d\theta)=o(s^{5}).\label{eq:add22}
\end{equation}

Therefore, by (\ref{eq:add21}) and (\ref{eq:add22}), we have 
\begin{equation}
\fint_{L(t)}F_{1}=o(s^{3}).\label{eq:add23}
\end{equation}
We then apply Lemma \ref{lem:basic estimate} and (\ref{eq:the most difficult tem})
to (\ref{eq:add23}) and obtain 
\begin{equation}
\fint_{L(t)}r^{3}(w')^{3}\ (\pi\circ i_{t})^{*}d\theta=\big(r_{1}w^{\prime}(r_{1})\big)^{3}+o(s).\label{eq:z3's 1}
\end{equation}
\end{proof}
Now we estimate $z(t)$. 
\begin{lem}
\label{lem:z}Assume that $u(x)\in C^{2}(D\backslash\{p_{1,},\cdots,p_{k}\})$
satisfies Condition (2) in Definition 1 and we use notations given
as above. We have 
\[
z(t)=r_{1}w^{\prime}(r_{1})+O(s^{3}),
\]
\[
z^{3}(t)=\big(r_{1}w^{\prime}(r_{1})\big)^{3}+12(r_{1}w^{\prime}(r_{1}))^{2}(\ln r_{1})^{3}\fint_{S^{3}}f(\theta)d\theta+o(s),
\]

and 
\[
z^{4}(t)=\big(r_{1}w^{\prime}(r_{1})\big)^{4}+16(\ln r_{1})^{3}\big(r_{1}w^{\prime}(r_{1})\big)^{3}\fint_{S^{3}}f(\theta)d\theta+o(1).
\]
\end{lem}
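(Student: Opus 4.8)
The plan is to reduce all three identities to a single asymptotic expansion of $z^{3}(t)=\fint_{L(t)}|\nabla u|^{3}\,dl$, since $z=(z^{3})^{1/3}$ and $z^{4}=(z^{3})^{4/3}$ then follow by Taylor-expanding $x\mapsto x^{1/3}$ and $x\mapsto x^{4/3}$ about the leading value $(r_{1}w'(r_{1}))^{3}$. Indeed, the first-order correction in $z^{3}$ has relative size $O(s^{4})$ against the leading term $(r_{1}w'(r_{1}))^{3}=O(s^{-3})$, so $z=r_{1}w'(r_{1})+O(s^{3})$ and the $z^{4}$-formula, with its factor $16$ arising as $\tfrac{4}{3}\cdot 12$, both drop out of the $z^{3}$ expansion with no further work. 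Thus I focus entirely on $z^{3}$.

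First I would convert the surface integral into a $\theta$-integral over $S^{3}$. Using Lemma \ref{lem: mean curavture and volume} for the volume form $dl=r_{1}^{3}(\pi\circ i_{t})^{*}d\theta\,(1+O(s^{5}))$ and Lemma \ref{lem: first derivative} for $|\nabla u|=|u_{r}|(1+O(s^{5}))$, this gives $z^{3}=\fint_{L(t)}|u_{r}|^{3}r_{1}^{3}(\pi\circ i_{t})^{*}d\theta\,(1+O(s^{5}))$, where the factor $(1+O(s^{5}))$ contributes only $O(s^{2})=o(s)$ since $|u_{r}|^{3}r_{1}^{3}=O(s^{-3})$. Differentiating the expansion (\ref{eq:new1}) radially gives $u_{r}=w'(r)+b$ with $b=-\tfrac{4f(\theta)s^{3}}{r}+o(s^{3})$; since $w'(r)=O(s^{-1})$ and $b=O(s^{3})$, the cube expands as $|u_{r}|^{3}=(w'(r))^{3}+3(w'(r))^{2}b+O(s^{5})$, so only the pure cube and the single cross term survive at the $o(s)$ level.

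Next I would evaluate the two surviving pieces. For the pure cube, $r_{1}^{3}/r^{3}=1+O(s^{5})$ by Lemma \ref{lem:basic estimate}, so $\fint_{L(t)}(w'(r))^{3}r_{1}^{3}(\pi\circ i_{t})^{*}d\theta=\fint_{L(t)}r^{3}(w'(r))^{3}(\pi\circ i_{t})^{*}d\theta+o(s)$, and Lemma \ref{lem:rw'^3} identifies this with $(r_{1}w'(r_{1}))^{3}+o(s)$. For the cross term $3\fint_{L(t)}(w'(r))^{2}b\,r_{1}^{3}(\pi\circ i_{t})^{*}d\theta$, I would replace $w'(r),s,r$ by $w'(r_{1}),s',r_{1}$ with relative error $O(s^{4})$ (Lemma \ref{lem:basic estimate} and Corollary \ref{cor:r-r1}) and use $\fint_{L(t)}f(\theta)(\pi\circ i_{t})^{*}d\theta=\fint_{S^{3}}f+o(1)$; since the $O(s)$ prefactor multiplies the $o(1)$ error to produce $o(s)$, and using $(s')^{3}=-(\ln r_{1})^{3}$, this yields $-12(r_{1}w'(r_{1}))^{2}(s')^{3}\fint_{S^{3}}f+o(s)=12(r_{1}w'(r_{1}))^{2}(\ln r_{1})^{3}\fint_{S^{3}}f+o(s)$. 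Summing the two pieces gives the claimed formula for $z^{3}$. A clean cross-check is the rotationally symmetric comparison $u_{1}$: its level set $\{r=r_{1}\}$ is a round sphere, so $z_{1}=r_{1}u_{1}'(r_{1})=r_{1}w'(r_{1})+4(\ln r_{1})^{3}\fint_{S^{3}}f$, whose cube reproduces exactly the leading and first-order terms found above.

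The main obstacle is controlling the cross term to order $o(s)$. This requires the radial derivative to sub-leading order, namely that $h$ contributes only $o(s^{3})$ to $u_{r}$ so that $u_{r}-w'(r)=-\tfrac{4fs^{3}}{r}+o(s^{3})$; because $w'(r)=O(s^{-1})$, an error of merely $O(s^{3})$ here would pollute the $O(s)$ correction, so the improvement from $O$ to $o$ is essential rather than cosmetic. It also requires the uniform replacement of the $L(t)$-average against $(\pi\circ i_{t})^{*}d\theta$ by the round $S^{3}$-average to within $o(1)$, which rests on the tangential estimates $r_{\theta},r_{\theta\theta}=O(s^{5})$ of Lemma \ref{lem: r tangential derivative} together with the sharpened expansion of $\varepsilon$ in Corollary \ref{cor:r-r1}; the cancellation $\fint_{S^{3}}\bar{f}=0$ then guarantees that the anisotropic part of $f$ does not enter the leading correction, leaving only $\fint_{S^{3}}f$.
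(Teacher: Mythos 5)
Your proposal is correct and follows essentially the same route as the paper: reduce everything to the expansion of $z^{3}=\fint_{L(t)}|\nabla u|^{3}\,dl$, split $|\nabla u|^{3}$ into the pure $(w')^{3}$ part (handled by Lemma \ref{lem:rw'^3}) plus the single surviving cross term coming from $\nabla_{r}\big(f(\theta)(\ln r)^{4}\big)$, evaluate that cross term as $12(r_{1}w'(r_{1}))^{2}(\ln r_{1})^{3}\fint_{S^{3}}f+o(s)$, and then obtain $z$ and $z^{4}$ by Taylor-expanding the powers $1/3$ and $4/3$, which is exactly how the paper produces the factor $16=\tfrac{4}{3}\cdot 12$. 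The only cosmetic difference is that you expand $|u_{r}|^{3}=(w'+b)^{3}$ after first replacing $|\nabla u|$ by $|u_{r}|$, whereas the paper expands $(|\nabla u|^{2})^{3/2}$ directly; your added consistency check against the comparison function $u_{1}$ is a nice touch but not a different argument.
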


\begin{proof}
We use Lemma \ref{lem: mean curavture and volume} to see that 
\[
\fint_{L(t)}|\nabla u|^{3}dl_{t}=\fint_{L(t)}|\nabla u|^{3}r^{3}(t,\theta)(\pi\circ i_{t})^{*}d\theta(1+O(s^{5})).
\]
Then, by Lemma \ref{lem: first derivative}, we get 
\[
\fint_{L(t)}|\nabla u|^{3}r^{3}(t,\theta)(\pi\circ i_{t})^{*}d\theta\cdot O(s^{5})\le O(s^{2}).
\]
We may then use (\ref{eq:new1}) to compute $z.$ Let $e(x)=-f(\theta)(\ln r)^{4}-h(x)$.
Then $u=w-e,$ and

\begin{align*}
|\nabla u|^{3} & =\big((w')^{2}+|\nabla e|^{2}-2w{}_{r}\nabla_{r}e\big)^{\frac{3}{2}}\\
 & =(w')^{3}\bigg\{1+\frac{3}{2}(\frac{|\nabla e|^{2}-2w'\nabla_{r}e}{(w')^{2}})+O\bigg(\big(\frac{3}{2}(\frac{|\nabla e|^{2}-2w'\nabla_{r}e}{(w')^{2}})\big)^{2}\bigg)\bigg\}\\
 & =(w')^{3}+\frac{3}{2}w'(|\nabla e|^{2}-2w'\nabla_{r}e)+O(\frac{(|\nabla e|^{2}-2w'\nabla_{r}e)^{2}}{w'}),\\
\end{align*}
which leads to 
\begin{align}
 & \fint_{L(t)}r^{3}|\nabla u|^{3}(\pi\circ i_{t})^{*}d\theta\label{eq:z3 asmpototic}\\
 & =\fint_{L(t)}r(t,\theta)^{3}(w')^{3}(\pi\circ i_{t})^{*}d\theta+\frac{3}{2}r^{3}w'(|\nabla e|^{2}-2w'\nabla_{r}e)(\pi\circ i_{t})^{*}d\theta+O(r^{3}\frac{(|\nabla e|^{2}-2w'\nabla_{r}e)^{2}}{w'}).
\end{align}
By Condition (2) in Definition \ref{def:AHS}, $O(r^{3}\frac{(|\nabla e|^{2}-2w'\nabla_{r}e)^{2}}{w'})=O(s^{5})$.

By (\ref{eq:first derivative difference}), (\ref{eq:volume approx})
and Condition (2) in Definition \ref{def:AHS}, 
\begin{align}
 & \fint_{L(t)}\frac{3}{2}r^{3}w'(|\nabla e|^{2}-2w'\nabla_{r}e)(\pi\circ i_{t})^{*}d\theta\label{eq:z3's 2}\\
 & =o(s)+\fint_{L(t)}3r^{3}(w')^{2}f(\theta)\nabla_{r}((\ln r)^{4})(\pi\circ i_{t})^{*}d\theta\\
 & =o(s)+\fint_{L(t)}12\bigg(r_{1}^{2}(w^{\prime}(r_{1}))^{2}+O(s^{2})\bigg)f(\theta)(\ln r)^{3}(\pi\circ i_{t})^{*}d\theta\nonumber \\
 & =o(s)+\fint_{L(t)}12\bigg(r_{1}^{2}(w^{\prime}(r_{1}))^{2}+O(s^{2})\bigg)f(\theta)\big((\ln r_{1})^{3}+O(s^{7})\big)(\pi\circ i_{t})^{*}d\theta\nonumber \\
 & =o(s)+12\big(r_{1}w^{\prime}(r_{1}))^{2}(\ln r_{1})^{3}\fint_{S^{3}}f(\theta)d\theta\nonumber \\
 & =12r_{1}^{2}(w^{\prime}(r_{1}))^{2}(\ln r_{1})^{3}\fint_{S^{3}}f(\theta)d\theta+o(s).\nonumber 
\end{align}
By (\ref{eq:z3's 1})(\ref{eq:z3 asmpototic})(\ref{eq:z3's 2}) and
Lemma \ref{lem:rw'^3}, we have

\begin{align*}
z^{3} & =\fint_{L(t)}r^{3}|\nabla u|^{3}(\pi\circ i_{t})^{*}d\theta+O(s^{2})\\
 & =\big(r_{1}w^{\prime}(r_{1})\big)^{3}+12(r_{1}w'(r_{1}))^{2}(\ln r_{1})^{3}\fint_{S^{3}}f(\theta)d\theta+o(s).\\
\end{align*}
Furthermore, 
\begin{align*}
z^{4} & =(\fint_{S^{3}}r^{3}|\nabla u|^{3})^{4/3}\\
 & =\big(r_{1}w^{\prime}(r_{1})\big)^{4}\bigg(1+16\frac{(\ln r_{1})^{3}}{r_{1}w'(r_{1})}\fint_{S^{3}}f(\theta)d\theta+o(s^{4})\bigg)\\
 & =\big(r_{1}w^{\prime}(r_{1})\big)^{4}+16(\ln r_{1})^{3}\big(r_{1}w^{\prime}(r_{1})\big)^{3}\fint_{S^{3}}f(\theta)d\theta+o(1).
\end{align*}
And 
\begin{align*}
z & =r_{1}w^{\prime}(r_{1})\bigg(1+4\frac{(\ln r_{1})^{3}}{r_{1}w^{\prime}(r_{1})}\fint_{S^{3}}f(\theta)d\theta+o(s^{4})\bigg)\\
 & =r_{1}w^{\prime}(r_{1})+O(s^{3}).
\end{align*}
\end{proof}
Now let us compute $\int_{L(t)}H|\nabla u|^{2}dl.$ 
\begin{lem}
\label{lem: mean curvature integ}Assume that $u(x)\in C^{2}(D\backslash\{p_{1,},\cdots,p_{k}\})$
satisfies Condition (2) in Definition 1 and we use notations given
as above. For $f\in C^{2}(S^{3})$, 
\[
\fint_{L(t)}H|\nabla u|^{2}dl=3\big(r_{1}w^{\prime}(r_{1})\big)^{2}+O(s^{2}).
\]
\end{lem}

\begin{proof}
By Lemma \ref{lem: first derivative} and (\ref{eq:first derivative difference}),

\begin{align*}
H|\nabla u|^{2}r^{3} & =r^{3}(\frac{3}{r_{1}}+O(s^{5}))(u_{r}^{2}+\frac{1}{r^{2}}u_{\theta}^{2})\\
 & =3r^{2}u_{r}^{2}+O(s^{3})\\
 & =3r^{2}(w'+O(s^{3}))^{2}+O(s^{3})\\
 & =3r^{2}(w')^{2}+O(s^{2})\\
 & =3\big(r_{1}w^{\prime}\left(r_{1}\right)+F_{1}+F_{2}\big)^{2}+O(s^{2})\\
 & =3\big(r_{1}w^{\prime}(r_{1})\big)^{2}+O(s^{2}).
\end{align*}

Combining with Lemma \ref{lem: mean curavture and volume}, we get

\begin{align*}
\fint_{L(t)}H|\nabla u|^{2}dl & =\fint_{L(t)}H|\nabla u|^{2}r^{3}(\pi\circ i_{t})^{*}d\theta(1+O(s^{5}))\\
 & =\fint_{L(t)}[3\big(r_{1}w^{\prime}(r_{1})\big)^{2}+O(s^{2})](\pi\circ i_{t})^{*}d\theta(1+O(s^{5}))\\
 & =3\big(r_{1}w^{\prime}(r_{1})\big)^{2}+O(s^{2}).
\end{align*}
\end{proof}
From Lemma \ref{lem:z} and Lemma \ref{lem: mean curvature integ},
we obtain 
\begin{lem}
\label{lem:mixed terms} Assume that $u(x)\in C^{2}(D\backslash\{p_{1,},\cdots,p_{k}\})$
satisfies Condition (2) in Definition 1 and we use notations given
as above. We have
\begin{align}
\frac{2}{9}z\fint_{L(t)}H|\nabla u|^{2}dl & =\frac{2}{3}\big(r_{1}w^{\prime}(r_{1})\big)^{3}+O(s).\label{eq:mixed term}
\end{align}
\end{lem}

\begin{proof}
By Lemmas \ref{lem:z} and \ref{lem: mean curvature integ}, we have
\[
\frac{2}{9}z\fint_{L(t)}H|\nabla u|^{2}=\frac{2}{9}\big(r_{1}w^{\prime}(r_{1})+O(s^{3})\big)\big(3\big(r_{1}w^{\prime}(r_{1})\big)^{2}+O(s^{2})\big).
\]
\end{proof}
Next, we estimate $C(t)$. 
\begin{lem}
\label{lem:C(t)} Assume that $u(x)\in C^{2}(D\backslash\{p_{1,},\cdots,p_{k}\})$
satisfies Condition (2) in Definition 1 and we use notations given
as above. We have
\[
C(t)=\frac{r_{1}^{4}e^{4w(r_{1})}}{4}+\fint_{S^{3}}f(\theta)d\theta\ (\frac{2}{1-r_{1}^{2}})^{4}(\ln r_{1})^{4}r_{1}^{4}(t)+O(s).
\]
\end{lem}

\begin{proof}
For $t=u_{1}(r_{1}(t))=u(r(t,\theta),\theta)$, noting $u_{1}(r_{1})=w(r_{1})+\frac{(\ln r_{1})^{4}}{|S^{3}|}\int_{S^{3}}f(\theta)d\theta$,
Lemma \ref{cor:r-r1} and Lemma \ref{lem:basic estimate}, we use
the polar coordinate to compute the integrals over regions of $\mathbb{R}^{4}$,

\begin{align}
C(t) & =\frac{1}{|S^{3}|}e^{4t}|\{u<t\}|\label{eq:volume estimate}\\
 & =\frac{1}{|S^{3}|}e^{4u_{1}(r_{1}(t))}\int_{S^{3}}\int_{\gamma\leq r(t,\theta)}\gamma^{3}d\gamma\pi^{*}(d\theta)\\
 & =\frac{1}{|S^{3}|}e^{4\fint_{S^{3}}f(\theta)d\theta(\ln r_{1})^{4}}e^{4w(r_{1})}\int_{S^{3}}\frac{1}{4}r^{4}(t,\theta)d\theta\nonumber \\
 & =\frac{1}{|S^{3}|}e^{4w(r_{1})}\bigg(1+4\fint_{S^{3}}f(\theta)d\theta\ (\ln r_{1})^{4}+O(s^{8})\bigg)\bigg(\int_{S^{3}}\frac{1}{4}r_{1}^{4}(t)d\theta+O(\varepsilon(t,\theta))\bigg)\nonumber \\
 & =\frac{r_{1}^{4}e^{4w(r_{1})}}{4}+\fint_{S^{3}}f(\theta)d\theta\ (\frac{2}{1-r_{1}^{2}})^{4}(\ln r_{1})^{4}r_{1}^{4}+O(s).\nonumber 
\end{align}
\end{proof}
Finally, we are ready to prove Theorem \ref{thm:negative side}. 
\begin{proof}
First, recall our quasi-local mass 
\[
m(t)=\frac{1}{5}[\frac{1}{4}z^{4}+\frac{2}{9}z\fint_{L(t)}H|\nabla u|^{2}+\frac{1}{3}z^{3}+\frac{1}{3}\fint_{L(t)}H|\nabla u|^{2}-C(t)].
\]
Second, recall $w=\log\frac{2}{1-r^{2}}$, which satisfies the following
differential equation: 
\begin{equation}
\frac{1}{4}(rw')^{4}+(rw')^{3}+(rw')^{2}-\frac{r^{4}}{4}e^{4w(r)}=0.\label{eq:add8}
\end{equation}
When $t\to\infty,$ we have $s\to0^{+},$ and $r_{1}\to1.$ We use
Lemma \ref{lem:z}, Lemma \ref{lem: mean curvature integ}, Lemma
\ref{lem:mixed terms}, Lemma \ref{lem:C(t)} and (\ref{eq:add8})
to compute

\[
\lim_{t\rightarrow\infty}m(t)=\frac{1}{5}[-4\fint_{S^{3}}f(\theta)d\theta-\fint_{S^{3}}f(\theta)d\theta]=-\fint_{S^{3}}f(\theta)d\theta=-m(M,g).
\]
We have thus finished the proof. 
\end{proof}

\end{document}